\documentclass[a4paper, 10pt]{amsart}
\textwidth16cm \textheight21cm \oddsidemargin-0.1cm
\evensidemargin-0.1cm
\usepackage{amsmath}
\usepackage{amssymb, color}

\theoremstyle{plain}
\newtheorem{theorem}{\bf Theorem}[section]
\newtheorem{proposition}[theorem]{\bf Proposition}
\newtheorem{lemma}[theorem]{\bf Lemma}
\newtheorem{corollary}[theorem]{\bf Corollary}

\theoremstyle{definition}
\newtheorem{example}[theorem]{\bf Example}

\newtheorem{definition}[theorem]{\bf Definition}

\newtheorem{remarks}[theorem]{\bf Remarks}

\newcommand{\N}{\mathbb N}
\newcommand{\Z}{\mathbb Z}

\DeclareMathOperator{\spec}{spec}

\newcommand{\DP}{\negthinspace : \negthinspace}

\newcommand{\red}{\text{\rm red}}

\renewcommand{\t}{\, | \,}

\newcommand{\RingReg}[1]{#1^{\bullet}}
\newcommand{\Fv}[1]{\mathcal{F}_v(#1)}

\numberwithin{equation}{section}

\begin{document}

\title{On $v$-Marot Mori rings and C-rings}

\address{Institut f\"ur Mathematik und Wissenschaftliches Rechnen\\
Karl--Fran\-zens--Universit\"at Graz, NAWI Graz \\
Heinrichstra{\ss}e 36\\
8010 Graz, Austria}

\email{alfred.geroldinger@uni-graz.at,  sebastian@ramacher.at, andreas.reinhart@uni-graz.at}

\author{Alfred Geroldinger and Sebastian Ramacher and Andreas Reinhart}

\thanks{This work was supported by
the Austrian Science Fund FWF P26036-N26.}

\keywords{Marot rings, Mori rings, Krull rings, Krull monoids, C-rings, C-monoids}

\subjclass[2010]{13F05, 13A15, 20M14}

\begin{abstract}
C-domains are defined via class semigroups, and every C-domain is a Mori domain with nonzero conductor whose complete integral closure is a Krull domain with finite class group. In order to extend the concept of C-domains to rings with zero divisors, we study $v$-Marot rings as generalizations of ordinary Marot rings and investigate their theory of regular divisorial ideals. Based on this we establish a generalization of a result well-known for integral domains. Let $R$ be a $v$-Marot Mori ring, $\widehat R$ its complete integral closure, and suppose that the conductor $\mathfrak f = (R : \widehat R)$ is regular. If the residue class ring $R/\mathfrak f$ and the class group $\mathcal C (\widehat R)$ are both finite, then $R$ is a C-ring. Moreover, we study both $v$-Marot rings and C-rings under various ring extensions.
\end{abstract}

\maketitle

%%%%%%%%%%%%%%%%%%%%%%%%%%%%%%%%%%%%%%%%%%%%%%%%%%%%%%%%%%%%%%%%%%%%%%%%%
%%                                      %%%%%%%%%%%%%%%
%%%%%%%%%%%%%%%%%%%%%%%%%%%%%%%%%%%%%%%%%%%%%%%%%%%%%%%%%%%%%%%%%%%%%%%%%

\bigskip
\section{Introduction} \label{1}
\bigskip

Arithmetical studies of noetherian or, more generally, of Mori domains split into two cases. First suppose that the domain is completely integrally closed. Then it is a Krull domain, the monoid of $v$-invertible $v$-ideals is free abelian, and there is a transfer homomorphism from the domain to the monoid of zero-sum sequences over a subset $G_P$ of the class group. This and a finiteness assumption on the subset $G_P$ are the basis for a variety of arithmetical finiteness results for Krull domains.

First arithmetical investigations of not completely integrally closed Mori domains were restricted to  one-dimensional domains and then to  weakly Krull domains. The concept of C-monoids opened the door to arithmetical investigations of higher dimensional Mori domains.
A C-monoid is a submonoid of a factorial monoid with finite class semigroup, and a domain is a C-domain if its multiplicative monoid is a C-monoid.
If $R$ is a C-domain, then $R$ is a Mori domain, its complete integral closure  $\widehat R$ is a Krull domain with finite class group and the conductor $\mathfrak f = (R : \widehat R)$ is nonzero (see \cite{Ge-HK06a, Re13a}).
The finiteness of the class semigroup allows  to derive similar arithmetical finiteness results for  C-monoids  as they are known for Krull monoids with finite class group (see Proposition \ref{4.2} for a summary, or \cite{Ge-HK06a, Fo-Ge05, Fo-Ha06a, Ga-Ge09b}).

Krull rings with zero divisors were introduced independently by Kennedy \cite{Ke80} and by Portelli and Spangher \cite{Po-Sp83a}, and they found a solid treatment in Huckaba's monograph \cite{Hu88a} in the setting of Marot rings.  The theory of Krull and Mori rings  was further developed (even without requiring the Marot property) by Chang, Glaz, Kang, Lucas, and others (\cite{Ka91a, Ka93a,  Os99a,  Ka00a, Ch01a, Ch-Ka02a, Gl03a, Lu04a, Lu05a, Lu07a, Ch10a}).

Factorization theory in rings with zero divisors was initiated by Daniel D. Anderson in the 1980s. This research was continued by various  authors (e.g., \cite{An-Ma85a,An-Ma85b, An-VL96a, An-VL97a, An-Ag-VL01, An-Ch11a, Ch-An-VL11a, Fr-Fr11a, Ch-Sm13a, Mo14a}), but in comparison to the domain case our knowledge on the arithmetic of rings with zero divisors is still very rudimentary. One possible approach is to focus on the monoid of regular elements, which definitely makes sense if the ring has   few zero divisors. If $R$ is a Marot Krull ring, then its monoid of regular elements is a Krull monoid (\cite{HK93f}, Corollary \ref{3.6}). Thus all arithmetical results for Krull monoids hold true for the monoid of regular elements of a Marot Krull ring. So far only little is known on the arithmetic of regular elements in the non Krull case.

The present paper provides a systematic approach towards the arithmetic of regular elements in a Mori ring which is not Krull.
Let $R$ be a Mori ring and $R^{\bullet}$ the monoid of its regular elements.
The relationship between regular divisorial ideals of $R$ and  regular divisorial ideals of the monoid $R^{\bullet}$ is crucial for our strategy. For this reason we study $v$-Marot rings (Definition \ref{3.2}) which turn out to be precisely those commutative rings for which there is a canonical semigroup isomorphism between the semigroup of regular divisorial ring ideals and the semigroup of regular divisorial ideals of $R^{\bullet}$ (Theorem \ref{3.5}). In Section \ref{3} we study the theory of regular divisorial ideals of $v$-Marot rings.

In Section \ref{4} we introduce C-rings as commutative rings whose monoid of regular elements is a C-monoid. A $v$-Marot C-ring $R$ turns out to be a Mori ring whose complete integral closure $\widehat R$ is Krull with finite class group and with regular conductor $\mathfrak f = (R : \widehat R)$ (Corollary \ref{4.4}). Our main result offers a partial converse which was well-known in the domain case (\cite[Theorem 2.11.9]{Ge-HK06a}). Indeed, if $R$ is a $v$-Marot Mori ring with the above properties and if in addition the residue class ring $R/\mathfrak f$ is finite, then $R$ is a C-ring (Theorem \ref{4.8}). Thus the monoid of regular elements of such a $v$-Marot Mori ring satisfies all arithmetical finiteness results of C-monoids (a summary of such results is given in Proposition \ref{4.2}).

\bigskip
\section{Preliminaries on Marot, Mori, and Krull rings} \label{2}
\bigskip

We denote by $\N$ the set of positive integers and set $\N_0 = \N \cup \{0\}$. For integers $a, b \in \Z$, we denote by $[a, b ] = \{x \in \Z \mid a \le x \le b\}$ the discrete interval.

\noindent
{\bf Semigroups.} By a {\it semigroup}, we mean a multiplicatively written, commutative semigroup with unit element. By a {\it monoid}, we mean a cancellative semigroup. Our notation and terminology are consistent with \cite{Ge-HK06a}. We briefly gather some key notions. Let $S$ be a monoid. Then $S^{\times}$ denotes the unit group, and for any subgroup $T \subset S^{\times}$,  $S/T = \{ s T \mid s \in T \}$ forms a commutative semigroup (with an identity) in a natural sense. If $T = S^{\times}$, then
$S _{\red} = \{ s S^{\times} \mid s \in S \}$ is  the associated reduced semigroup. We say that $S$ is reduced if $S^{\times} = \{1\}$.

Let $H$ be a monoid. Then $\mathsf q (H)$ denotes the quotient group of $H$, and $\widehat H \subset \mathsf q (H)$ the complete integral closure of $H$. For subsets $X,\, Y \subset \mathsf q(H)=Q$ we set
\[
(Y :_Q X) = (Y \DP X) = \big\{ a\in Q \, \big| \, a X \subset Y\big\}\,, \;\; X^{-1}
= (H \DP X)\, , \;\; \text{and} \;\; X_{v_H} = X_v = (X^{-1})^{-1}\,.
\]
We say that $X$ is {\it $H$-fractional} if there is some $c \in H$ such that $cX \subset H$,  that $X$ is a {\it fractional $v$-ideal} of $H$ if $X$ is $H$-fractional and $X_v = X$, and that $X$ is a $v$-ideal of $H$ if $X \subset H$ and $X_v = X$.
We denote by  $v$-$\spec (H)$ the set of prime $v$-ideals of $H$, by $(\mathcal F_v (H), \cdot_v)$ the semigroup of fractional $v$-ideals of $H$ with $v$-multiplication, and by  $\mathcal I_v (H)$ the subsemigroup of $v$-ideals of $H$.
  Then $\mathcal I_v^* (H) = \mathcal I_v (H) \cap \mathcal F_v (H)^{\times}$ is the monoid of $v$-invertible $v$-ideals of $H$ and its quotient group is $\mathcal F_v (H)^{\times}$. In any context, the terms $v$-ideal and divisorial ideal will be used synonymously.

The monoid $H$ is called a
\begin{itemize}
\item {\it Mori monoid} ($v$-{\it noetherian} resp.) if it satisfies the ACC
(ascending chain condition) on $v$-ideals,

\item {\it Krull monoid} if it is a completely integrally closed Mori monoid.
\end{itemize}

\smallskip
\noindent
{\bf Class groups.} Let $F$ be a monoid and $H \subset F$ a submonoid.
We say that $H \subset F$ is
\begin{itemize}
\item {\it saturated} if $H = F \cap \mathsf q (H)$,

\item {\it cofinal} if for all $a \in F$ there is an element $b \in H$ such that $a \t b$.
\end{itemize}
For every $a \in F$ we set $[a]_{F/H} = a \mathsf q (H) \in \mathsf q (F)/\mathsf q (H)$, and we define
\[
F/H = \{ [a]_{F/H} \mid a \in F \} \subset \mathsf q (F)/\mathsf q (H) \,.
\]
Then $H \subset F$ is cofinal if and only if $F/H$ is a group. In particular, if $F/H$ is finite or if $\mathsf q (F)/\mathsf q (H)$ is a torsion group, then $F/H = \mathsf q (F)/\mathsf q (H)$.

Now suppose that $H$ is a Mori monoid and set $\mathcal H = \{aH \mid a \in H\}$. Then $\mathcal H \subset \mathcal I_v^* (H)$ is saturated and cofinal, and all above concepts coincide with the usual $v$-class group of $H$. Indeed, we have
\[
\mathcal C_v (H) = \mathcal F_v (H)^{\times}/\mathsf q (\mathcal H) = \mathcal I_v^* (H)/\mathcal H \,.
\]

\smallskip
\noindent
{\bf Rings.} By a {\it ring}, we mean a commutative ring with unit element. Let $R$ be a ring. We denote by $R^{\times}$ the group of invertible elements of $R$, by $\mathsf T (R)$ the total quotient ring of $R$, by $\mathsf Z (R)$ the set of zero divisors, by $\overline R$ the integral closure of $R$ in $\mathsf T (R)$, and by $\widehat R$ the complete integral closure of $R$ in $\mathsf T (R)$. For a subset $X \subset \mathsf T (R)$, we denote by $X^{\bullet} = X \setminus \mathsf Z ( \mathsf T (R) )$  the set of all regular elements of $X$, and we say that $X$ is regular if $X^{\bullet} \ne \emptyset$.
Clearly, the set of regular elements $R^{\bullet}$ of $R$ is a monoid, and $\mathsf T (R)^{\bullet} = \mathsf T (R)^{\times} = \mathsf q (R^{\bullet})$.

Let $X, Y \subset \mathsf T (R) = T$ be subsets.  We define
\[
X Y = \{xy \mid x\in X, y\in Y \} \quad \text{and} \quad X^k = \{\prod_{i=1}^k x_i\mid x_1, \ldots, x_k \in X \} \ \text{ for all} \ k\in\N \,.
\]
Moreover, $X$ is called {\it $R$-fractional} if $c X \subset R$ for some $c\in R^{\bullet}$. If $X \subset T^{\bullet}$, then $X$ is regular if and only if $X\not=\emptyset$, and   $X$ is $R$-fractional if and only if $X$ is $R^{\bullet}$-fractional. We set
\[
(Y :_T X) = (Y \DP X) = \big\{ a\in  T  \, \big| \, a X \subset Y\big\}\,, \;\; X^{-1}
= (R \DP X)\, , \;\; \text{and} \;\; X_{v_R} = X_v = (X^{-1})^{-1}\,.
\]
We say that $X$ is a regular fractional $v$-ideal  of $R$ if $X$ is regular, $R$-fractional, and $X_v = X$, and that $X$ is a regular $v$-ideal  of $R$ if $X \subset R$ is regular and $X_v = X$. We denote by $v$-$\spec (R)$ the set of regular prime $v$-ideals of $R$, by $(\mathcal F_v (R), \cdot_v)$ the semigroup of regular fractional $v$-ideals of $R$ with $v$-multiplication, and by $\mathcal I_v (R)$ the subsemigroup of regular $v$-ideals of $R$. Then $\mathcal I_v^* (R) = \mathcal I_v (R) \cap \mathcal F_v (R)^{\times}$ is the monoid of $v$-invertible regular $v$-ideals of $R$ and its quotient group equals $\mathcal F_v (R)^{\times}$.   Note that if
 $X$ is regular and $R$-fractional, then $X_v \in\mathcal{F}_v(R)$,  and every regular fractional $v$-ideal is a regular submodule of $\mathsf T (R)$. In any context, the terms $v$-ideal and divisorial ideal will be used synonymously.

The ring $R$ is called a
\begin{itemize}
\item {\it Marot ring} if every regular ideal of $R$ is generated by its regular elements,

\item {\it Mori ring} if it satisfies the ACC on regular divisorial ideals of $R$,

\item {\it Krull ring} if it is a completely integrally closed Mori ring.
\end{itemize}
Finite direct products of domains, noetherian rings, polynomial rings over arbitrary commutative rings are Marot rings, and all overrings of Marot rings are Marot rings. This and various characterizations of Marot rings can be found in Huckaba's book \cite{Hu88a}.
For more information on Mori rings we refer to the work of Lucas \cite{Lu04a, Lu05a, Lu07a}, and in particular to the characterization given in   \cite[Theorem 2.22]{Lu04a}. For a characterization of Krull rings we refer to the work of Kang \cite[Theorem 13]{Ka00a} (note that older concepts of Krull rings -- as given in \cite{Po-Sp83a} and in Huckaba's monograph \cite{Hu88a} -- coincide with the present one in the setting of Marot rings).

\smallskip
It is well-known that the $v$-system on a commutative ring still has many of the nice properties of a $v$-system on a domain. Formally speaking, the $v$-system on a commutative ring $R$ is a weak ideal system
(in the sense of \cite{HK98}). This can be seen from the next lemma. However, since we will use only the $v$-system in this paper, we do not give the abstract definition of (weak) ideal systems. For their properties and their relationship to star and semistar operations, the interested reader may want to consult the survey article \cite{HK11b}.

Since the relationship between the regular divisorial ideals of a ring $R$ and the regular divisorial ideals of its regular monoid $R^{\bullet}$ is crucial in the present paper, we provide a careful analysis of these properties with full proofs. We start with two lemmas (parts of which are definitely well-known) and a remark, where we remind of two main differences between the $v$-system on a general ring and the $v$-system on a domain.

\medskip
\begin{lemma} \label{2.1}
Let $R$ be a ring, $T=\mathsf{T}(R)$, $X,Y\subset T$, and $c\in T$.
\begin{enumerate}
\item $X \cup \{0\} \subset X_v$ and $(X_v)^{-1}=X^{-1}$.

\item $X \subset Y_v$ implies that $X_v \subset Y_v$.

\smallskip
\item $(XY)^{-1}=(X^{-1}:_T Y)$.

\smallskip
\item $(XY)_v=(X_vY)_v$.

\smallskip
\item $cX_v\subset (cX)_v$ and if $c\in T^{\bullet}$, then $cX_v=(cX)_v$.

\smallskip
\item $(X_v:_T Y)=(X_v:_T Y_v)$.
\end{enumerate}
\end{lemma}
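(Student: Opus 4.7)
The strategy is to handle (1) by unwinding the definitions $X^{-1} = \{a \in T \mid aX \subset R\}$ and $X_v = (X^{-1})^{-1}$, and then derive the remaining five parts by combining (1) with the elementary inclusion-reversing behaviour of $(\cdot)^{-1}$. For (1), if $x \in X$ and $a \in X^{-1}$ then $ax \in R$, so $x \in (X^{-1})^{-1} = X_v$; and $0 \cdot a = 0 \in R$ gives $0 \in X_v$. From $X \subset X_v$ one gets $(X_v)^{-1} \subset X^{-1}$; conversely, every $a \in X^{-1}$ multiplies every element of $X_v = (X^{-1})^{-1}$ into $R$ by the very definition of $X_v$, so $(X_v)^{-1} = X^{-1}$.

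For (2), $X \subset Y_v$ yields $X^{-1} \supset (Y_v)^{-1} = Y^{-1}$ by (1), and applying $(\cdot)^{-1}$ once more gives $X_v \subset Y_v$. Part (3) is a direct chain of equivalences: $a \in (XY)^{-1} \iff aXY \subset R \iff aY \subset X^{-1} \iff a \in (X^{-1} :_T Y)$. For (4), the inclusion $XY \subset X_v Y$ (from (1)) together with (2) gives $(XY)_v \subset (X_v Y)_v$; for the reverse, take $x \in X_v$, $y \in Y$ and $a \in (XY)^{-1}$; by (3) we have $ay \in X^{-1}$, so $(xy)a = x(ay) \in R$, hence $xy \in (XY)_v$, and (2) upgrades this to $(X_v Y)_v \subset (XY)_v$.

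Part (5) splits into two pieces. The inclusion $cX_v \subset (cX)_v$ is verified pointwise: given $x \in X_v$ and $a \in (cX)^{-1}$, we have $ac \in X^{-1}$, whence $cx \cdot a = x(ac) \in R$. If additionally $c \in T^\bullet$ is regular, then $c^{-1}$ exists in $T$, and applying the just-proved inclusion to $c^{-1}$ and $cX$ gives $c^{-1}(cX)_v \subset (c^{-1}cX)_v = X_v$, so $(cX)_v \subset cX_v$. Finally, (6): the containment $(X_v :_T Y_v) \subset (X_v :_T Y)$ is immediate from $Y \subset Y_v$, and conversely if $aY \subset X_v$ then by (2) and (4), $aY_v \subset (aY_v)_v = (aY)_v \subset (X_v)_v = X_v$, giving $a \in (X_v :_T Y_v)$.

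I do not expect any real obstacle here: the whole lemma is a formal consequence of the adjunction built into the definition of $(\cdot)^{-1}$, and the only place where zero divisors enter nontrivially is the second half of (5), where the hypothesis $c \in T^\bullet$ is used exactly to invert $c$ in $T$. The only care needed is to keep $0$ in all the sets (so that statements like $cX_v \subset (cX)_v$ remain correct when $c = 0$ or $0 \in X$), and to apply (4) in the symmetric form $(XY)_v = (XY_v)_v$ that follows from commutativity when proving (6).
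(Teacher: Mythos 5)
Your proof is correct and follows essentially the same route as the paper's: everything is reduced to the adjunction properties of $(\cdot)^{-1}$ established in part (1). The only differences are cosmetic bookkeeping choices --- in (4) and (6) the paper computes formally with colon ideals, e.g.\ $(X_vY)_v=((X_v)^{-1}:_T Y)^{-1}=(X^{-1}:_T Y)^{-1}=(XY)_v$, where you argue pointwise or via (2), and in the second half of (5) you apply the first inclusion to $c^{-1}$ where the paper computes $(cX)^{-1}=c^{-1}X^{-1}$ directly.
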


\begin{proof}
1. Let $x\in X$ and $y\in X^{-1}$. Then $xy\in R$, hence $xX^{-1}\subset R$, and thus $x\in X_v$. Next we show that $(X_v)^{-1}=X^{-1}$. Let $x\in (X_v)^{-1}$ be given. Then $xy\in R$ for all $y\in X_v$. Therefore, $xy\in R$ for all $y\in X$, hence $x\in X^{-1}$. Conversely, let $x\in X^{-1}$. Then $xy \in R$ for all  $y\in X_v$.  Consequently, $xX_v\subset R$, and thus $x\in (X_v)^{-1}$.

\smallskip
2.  It follows by 1. that $Y^{-1}=(Y_v)^{-1}\subset X^{-1}$. Therefore, $X_v = (X^{-1})^{-1} \subset (Y^{-1})^{-1} = Y_v$.

\smallskip
3. Let $x\in T$. Then $x\in (XY)^{-1}$ if and only if $xXY\subset R$ if and only if $xY\subset X^{-1}$ if and only if $x\in (X^{-1}:_T Y)$.

\smallskip
4. It follows by 1. and 3. that $(X_vY)_v=((X_vY)^{-1})^{-1}=((X_v)^{-1}:_T Y)^{-1}=(X^{-1}:_T Y)^{-1}=(XY)_v$.

\smallskip
5. By 1. and 4. we have $cX_v\subset (cX_v)_v=(\{c\}X_v)_v=(\{c\}X)_v=(cX)_v$. Now let $c\in T^{\bullet}$ and $x\in T$. Then $x\in (cX)^{-1}$ if and only if $xcX\subset R$ if and only if $cx\in X^{-1}$ if and only if $x\in c^{-1}X^{-1}$. Therefore, $(cX)^{-1}=c^{-1}X^{-1}$. Consequently, $(cX)_v=((cX)^{-1})^{-1}=(c^{-1}X^{-1})^{-1}=(c^{-1})^{-1}(X^{-1})^{-1}=cX_v$.

\smallskip
6. Note that $(X_v:_T Y_v)=((R:_T X^{-1}):_T Y_v)=(R:_T X^{-1}Y_v)=((R:_T Y_v):_T X^{-1})=((R:_T Y):_T X^{-1})=(R:_T X^{-1}Y)=(X_v:_T Y)$ by 1. and 3.
\end{proof}

\smallskip
In general, the $v$-system is not an ideal system. This means that in general we do not have $cX_v=(cX)_v$ for all $c \in T$,  as we point out in the following remark.

\medskip
\begin{remarks}~ \label{2.2}

1. We provide an example of a ring $R$ for which  there are $c \in R$ and $X \subset R$ such that $cX_v \subsetneq (cX)_v$.  Let $R$ be a ring with $R = \mathsf Z (R) \cup R^{\times}$, and let $a, b \in \mathsf Z (R) \setminus \{0\}$ with $a b = 0$. Then $\mathsf T (R) = R$,
\[
\{a\}_v = (R :_T (R:_T \{a\})) = (R :_T R) = R \,,
\]
$b\{a\}_v = bR$, but $\{ab\}_v = \{0\}_v = R$ whence $b\{a\}_v \subsetneq \{ab\}_v$.

\smallskip
2. In general, a divisorial ideal need not be the intersection of the fractional principal ideals containing  it. The first example is due to Daniel D. Anderson (\cite[Example]{An-Ma85b}; see also \cite[Section 27, Example 11]{Hu88a}). However, $v$-Marot rings -- as considered in Section \ref{3} -- do have this property.
\end{remarks}

\medskip
\begin{lemma} \label{2.3}
Let $R$ be a ring,  $T=\mathsf{T}(R)$, $H=R^{\bullet}$, $Q=\mathsf{q}(H)$, and $\mathfrak{f}=(R:_T\widehat{R})$.
\begin{enumerate}
\item For every $R$-module $M \subset T$ and every subset $Y \subset T$, we have $(M \DP_T \ {_R\langle} Y^{\bullet}\rangle)^{\bullet} = (M^{\bullet} \DP_Q \ Y^{\bullet})$ and $(M \DP_T \ {_R\langle} Y^{\bullet}\rangle) = (M \DP_T \ Y^{\bullet})$.

\smallskip
\item For every  finite subset  $E\subset\widehat{R}$, we have $E_{v_R}\subset\widehat{R}$,  and $(\widehat{R})_{v_R}=(\mathfrak{f}:_T\mathfrak{f})$.

\smallskip
\item For every subset $\mathfrak{a}\subset Q$, we have $(R\DP_T\,\mathfrak{a})^{\bullet} = (H\DP_Q\,\mathfrak{a})$ and $(\mathfrak{a}_{v_R})^{\bullet}\subset\mathfrak{a}_{v_H}$.
\end{enumerate}
\end{lemma}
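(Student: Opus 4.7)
The plan is to deduce (3) directly from (1), and to prove the two assertions in (2) separately --- the first by an inductive analysis of the $v$-closure of powers of $E$, and the second by exploiting the fact that $\widehat R$ is a subring of $T$.

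For (1), since $M$ is an $R$-module the quotient $(M:_T Z)$ depends only on the $R$-submodule generated by $Z$, which gives the second identity immediately. Intersecting $(M:_T Y^{\bullet})$ with $T^{\bullet}=Q$ and using that $aY^{\bullet}\subset Q$ whenever $a\in Q$ (products of regular elements are regular), together with $M\cap Q=M^{\bullet}$, yields the first identity. Part (3) is then the special case $M=R$, $Y=\mathfrak a$ (noting $\mathfrak a^{\bullet}=\mathfrak a$ since $\mathfrak a\subset Q$). For the inclusion $(\mathfrak a_{v_R})^{\bullet}\subset \mathfrak a_{v_H}$, take $x\in Q$ with $x(R:_T\mathfrak a)\subset R$; restricting to regular parts gives $x(H:_Q\mathfrak a)\subset R\cap Q=H$, hence $x\in\mathfrak a_{v_H}$.

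For the first half of (2), fix witnesses $c_i\in R^{\bullet}$ with $c_i e_i^k\in R$ for all $k\ge 0$, and set $c=c_1\cdots c_n\in R^{\bullet}$. Writing any product of length $k$ from $E$ as a rearranged monomial $e_1^{a_1}\cdots e_n^{a_n}$, one gets $c\cdot e_1^{a_1}\cdots e_n^{a_n}=\prod_i(c_i e_i^{a_i})\in R$, so $c\in (E^k)^{-1}$ for every $k\ge 0$. The crux is an induction on $k$ showing that $y^k\in (E^k)_v$ for every $y\in E_v$: given $d\in (E^k)^{-1}$ and $e\in E$, the inclusion $de\cdot E^{k-1}\subset d\cdot E^k\subset R$ forces $de\in (E^{k-1})^{-1}$, and by the inductive hypothesis $y^{k-1}(de)\in R$; letting $e$ vary over $E$ shows $y^{k-1}d\in E^{-1}$, so $y^k d=y\cdot(y^{k-1}d)\in R$ using $y\in E_v$. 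Combined with $c\in (E^k)^{-1}$, the pairing $(E^k)_v\cdot (E^k)^{-1}\subset R$ gives $cy^k\in R$ for all $k\ge 0$, so $y\in\widehat R$.

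For the second half of (2), Lemma~\ref{2.1}(1) computes $(\widehat R)_{v_R}=((\widehat R)^{-1})^{-1}=\mathfrak f^{-1}=(R:_T\mathfrak f)$. The inclusion $(\mathfrak f:_T\mathfrak f)\subset (R:_T\mathfrak f)$ is immediate. For the converse, one first checks that $\widehat R$ is a subring of $T$ (the product $c_x c_y$ of two witnesses handles the full binomial expansion of $(x+y)^k$), which in turn forces $\mathfrak f\widehat R\subset\mathfrak f$; then $x\in (R:_T\mathfrak f)$ gives $x\mathfrak f\widehat R\subset x\mathfrak f\subset R$, hence $x\mathfrak f\subset (R:_T\widehat R)=\mathfrak f$, i.e.\ $x\in(\mathfrak f:_T\mathfrak f)$. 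The main obstacle in the whole lemma is the inductive argument in the first half of (2), which is precisely what transforms the first-order containment $cE\subset R$ into uniform control $cy^k\in R$ over all powers of every element of $E_v$.
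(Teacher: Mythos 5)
Your proof is correct and follows essentially the same route as the paper's: parts (1) and (3) are the same direct computations, and the identity $(\widehat R)_{v_R}=(\mathfrak f:_T\mathfrak f)$ is obtained in both cases from $\widehat R\mathfrak f\subset\mathfrak f$ via the colon identities. The only cosmetic difference is in the first half of (2), where your induction establishing $y^k\in(E^k)_v$ is a hands-on proof of exactly the instance of $(X_vY)_v=(XY)_v$ (Lemma~\ref{2.1}.4/5) that the paper invokes to get $c(E_v)^k\subset(cE^k)_v\subset R$.
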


\begin{proof}
1. By definition, we have $(M \DP_T \ {_R\langle} Y^{\bullet}\rangle)^{\bullet} \subset (M^{\bullet} \DP_Q \ Y^{\bullet})$. Conversely, let $z \in (M^{\bullet} \DP_Q \ Y^{\bullet})$. If $x \in {_R\langle} Y^{\bullet} \rangle$, then $x = \lambda_1 y_1 + \ldots, + \lambda_m y_m$, where $m \in \N$, $\lambda_1, \ldots, \lambda_m \in R$, and $y_1, \ldots, y_m \in Y^{\bullet}$. Then $zy_1, \ldots, zy_m \in M^{\bullet}$, and $zx = \lambda_1 z y_1 + \ldots, + \lambda_m z y_m \in M$. The proof of the second equation runs along the same lines.

2. Let $E\subset\widehat{R}$ be finite. There is some sequence $(c_e)_{e\in E}$ of regular elements of $R$ such that $c_ee^k\in R$ for all $e\in E$ and $k\in\mathbb{N}$. Set $c=\prod_{e\in E} c_e$. It follows that $c\in R^{\bullet}$ and $cE^k\subset R$ for all $k\in\mathbb{N}$. Let $x\in E_v$. Then $cx^k\in c(E_v)^k\subset (c(E_v)^k)_v=(cE^k)_v\subset R$ for all $k\in\mathbb{N}$, hence $x\in\widehat{R}$. We have $(\mathfrak{f}:_T\mathfrak{f})=((R:_T\widehat{R}):_T\mathfrak{f})=(R:_T\widehat{R}\mathfrak{f})=(R:_T\mathfrak{f})=(R:_T (R:_T\widehat{R}))=(\widehat{R})_{v_R}$.

\smallskip
3. If $a\in (R\DP_T\,\mathfrak{a})^{\bullet}$, then $a\in T^{\bullet}$ with $a\mathfrak{a}\subset R$, hence $a\mathfrak{a}\subset R^{\bullet}$, and thus $a\in (R^{\bullet}\DP_Q\,\mathfrak{a})$. It is obvious that $(H:_Q\mathfrak{a})\subset (R:_T\mathfrak{a})^{\bullet}$. Let $x\in (\mathfrak{a}_{v_R})^{\bullet}$. Then $x(H:_Q\mathfrak{a})\subset x(R:_T\mathfrak{a})\subset R$, hence $x(H:_Q\mathfrak{a})\subset R\cap Q=H$. Therefore, $x\in\mathfrak{a}_{v_H}$.
\end{proof}

\bigskip
\section{On $v$-Marot rings} \label{3}
\bigskip

A finitary weak module system $r$ on a commutative ring $R$ gives rise to a weak module system on the monoid $R^{\bullet}$, and $R$ is said to be an $r$-Marot ring if every regular $r$-module is generated by its regular elements (if the $d$-system denotes the system of classical ring ideals, then the notion of a $d$-Marot ring coincides with the notion of an ordinary Marot ring). These abstract concepts were introduced in \cite{HK03a} to study Dedekind and Pr\"ufer monoids without cancellation.

In this section we study $v$-Marot rings which were first considered in \cite{An-Ma85b} (they are precisely the rings satisfying Property (D)).  Clearly, $v$-Marot rings generalize ordinary Marot rings and allow an inclusion preserving isomorphism between the semigroup of regular divisorial ring ideals and the semigroup of regular divisorial ideals of the associated monoid (see Theorem \ref{3.5}). Therefore $v$-Marot rings provide the perfect setting for the study of C-rings done in Section \ref{4}.

\medskip
\begin{lemma} \label{3.1}
Let $R$ be a ring and $T = \mathsf T (R)$. Then the following statements are equivalent{\rm \,:}
\begin{enumerate}
\item[(a)] For every $I \in \mathcal I_v (R)$, we have $I=(I^{\bullet})_v$ $($i.e., each regular divisorial ideal of $R$ is $v$-generated by its regular elements$)$.

\smallskip
\item[(b)] For every $I\in\mathcal{F}_v(R)$, we have $I=(I^{\bullet})_v$.

\smallskip
\item[(c)]For every $I\in\mathcal{F}_v(R)$, we have $I=\bigcap_{z\in T^{\bullet},zR\supset I} zR$.
\end{enumerate}
\end{lemma}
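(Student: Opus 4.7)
The plan is to establish the chain (b) $\Rightarrow$ (a) $\Rightarrow$ (b) together with the equivalence (b) $\Leftrightarrow$ (c). The implication (b) $\Rightarrow$ (a) is immediate since $\mathcal{I}_v(R)\subset\mathcal{F}_v(R)$. For (a) $\Rightarrow$ (b), given $I\in\mathcal{F}_v(R)$ I would pick $c\in R^\bullet$ with $cI\subset R$. Then $cI$ is regular (as $cx\in T^\bullet$ for every $x\in I^\bullet$), Lemma \ref{2.1}(5) gives $(cI)_v=cI_v=cI$, so $cI\in\mathcal{I}_v(R)$, and multiplication by the regular element $c$ preserves regularity in both directions, so $(cI)^\bullet=cI^\bullet$. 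Applying (a) to $cI$ and cancelling $c$ via Lemma \ref{2.1}(5) once more yields $I=(I^\bullet)_v$.

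The heart of the argument is (b) $\Leftrightarrow$ (c), and it rests on the identity
\[
\bigcap_{z\in T^\bullet,\; X\subset zR} zR \;=\; \bigl((X^{-1})^\bullet\bigr)^{-1} \qquad (X\subset T).
\]
This holds because, for $z\in T^\bullet$, the condition $X\subset zR$ is equivalent to $z^{-1}\in(X^{-1})^\bullet$, so the intersection equals $\{y\in T\mid y(X^{-1})^\bullet\subset R\}$. Under this identity, (c) becomes the statement that $I=((I^{-1})^\bullet)^{-1}$ for every $I\in\mathcal{F}_v(R)$, while (b) is the statement $I=((I^\bullet)^{-1})^{-1}$. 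Since $(I^{-1})^{-1}=I_v=I$ for $I\in\mathcal{F}_v(R)$, the two equations interchange under the substitution $I\mapsto I^{-1}$: applying (b) to $I^{-1}$ and taking $(\cdot)^{-1}$ via Lemma \ref{2.1}(1) yields (c), and applying (c) to $I^{-1}$ yields (b).

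The price to pay, and the main mild obstacle, is checking that $I^{-1}\in\mathcal{F}_v(R)$ whenever $I\in\mathcal{F}_v(R)$, so that the swap $I\mapsto I^{-1}$ is legal. Its $v$-closedness is automatic, and $R$-fractionality follows from $dI^{-1}\subset R$ for any $d\in I^\bullet$. Regularity requires a small computation: picking $c\in R^\bullet$ with $cI\subset R$, Lemma \ref{2.1}(3) together with $\{c\}^{-1}=c^{-1}R$ gives $(cI)^{-1}=c^{-1}I^{-1}$, and the inclusion $cI\subset R$ forces $R\subset(cI)^{-1}=c^{-1}I^{-1}$, so $c\in I^{-1}\cap T^\bullet$ and $I^{-1}$ is regular. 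Everything else is a purely formal manipulation of the $v$-calculus recorded in Lemma \ref{2.1}.
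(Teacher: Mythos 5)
Your proof is correct and follows essentially the same route as the paper: both arguments pass from (a) to (b) by multiplying by a regular $c$ with $cI\subset R$, and both handle the equivalence with (c) by applying the hypothesis to $I^{-1}$ and exploiting $(I^{-1})^{-1}=I$ together with Lemma \ref{2.1}, after noting $I^{-1}\in\mathcal{F}_v(R)$. Your only addition is to make explicit the identity $\bigcap_{z\in T^{\bullet},\,X\subset zR}zR=\bigl((X^{-1})^{\bullet}\bigr)^{-1}$ and the verification that $I^{-1}$ is regular and $R$-fractional, both of which the paper uses implicitly.
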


\begin{proof}
 (a) $\Rightarrow$ (b) Let $I\in\mathcal{F}_v(R)$. Then $cI\in\mathcal{I}_v(R)$ for some $c\in R^{\bullet}$. It follows that $c(I^{\bullet})_v=(cI^{\bullet})_v=((cI)^{\bullet})_v=cI$, and thus $(I^{\bullet})_v=I$.

\smallskip
 (b) $\Rightarrow$ (c) Let $I\in\mathcal{F}_v(R)$. Then $I^{-1}\in\mathcal{F}_v(R)$. Set $E=(I^{-1})^{\bullet}$. Then $I^{-1}=E_v$, hence $I=E^{-1}=\bigcap_{f\in E} f^{-1}R$. This implies that $I\subset\bigcap_{z\in T^{\bullet},zR\supset I} zR\subset\bigcap_{f\in E} f^{-1}R=I$. Therefore, $I=\bigcap_{z\in T^{\bullet},zR\supset I} zR$.

\smallskip
 (c) $\Rightarrow$ (a) Let $I\in\mathcal{I}_v(R)$. Then $I^{-1}\in\mathcal{F}_v(R)$. Set $F=\{z\in T^{\bullet}\mid zR\supset I^{-1}\}$, and $E=\{z^{-1}\mid z\in F\}$. It follows that $I^{-1}=\bigcap_{z\in F} zR=\bigcap_{z\in E} z^{-1}R=E^{-1}$, hence $I=E_v$. Therefore, $I=E_v\subset (I^{\bullet})_v\subset I$, and thus $I=(I^{\bullet})_v$.
\end{proof}

\medskip
\begin{definition} \label{3.2}
A ring $R$ is called a {\it $v$-Marot ring} if it satisfies the equivalent conditions of Lemma \ref{3.1}.
\end{definition}

\smallskip
The next proposition first points out that every Marot ring is a $v$-Marot ring. Moreover,
in our main results  we will often assume that the conductor $(R\DP\widehat{R})$ of the given ring $R$ is  regular, and every $v$-Marot C-ring will have this property (see Corollary \ref{4.4}). Based on the work of Chang and Kang, the next proposition demonstrates that this assumption holds true in very general but  natural settings.

\medskip
\begin{proposition} \label{3.3}
Let $R$ be a ring.
\begin{enumerate}
\item If $R$ is noetherian, then $R$ is a Marot ring, and if $R$ is a Marot ring, then $R$ is a $v$-Marot ring.

\smallskip
\item Let $S$ be a ring with $R\subset S\subset\mathsf{T}(R)$ and suppose that $S$ is a finitely generated $R$-module. Then $(R : S)$ is regular and we have
      \begin{enumerate}
      \item $R$ is noetherian if and only if $S$ is noetherian.

      \item Every regular ideal of $R$ is finitely generated if and only if every regular ideal of $S$ is finitely generated.
      \end{enumerate}

\smallskip
\item Suppose that every regular ideal of $R$ is finitely generated. Then $R$ is a Mori ring, its integral closure $\overline{R}$ is a Krull ring, and hence $\overline{R}=\widehat{R}$. Moreover,  $\mathfrak{f}=(R \DP \widehat{R})$ is regular if and only if $\widehat{R}$ is a finitely generated $R$-module.
\end{enumerate}
\end{proposition}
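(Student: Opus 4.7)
\medskip
\noindent\textbf{Proof plan.} For part 1, the implication ``noetherian $\Rightarrow$ Marot'' is classical (see, e.g., Huckaba's monograph \cite{Hu88a}): in a noetherian ring $\mathsf{Z}(R)$ is a finite union of associated primes $P_1,\ldots,P_k$, none of which contains a fixed regular element $a$ of a regular ideal $I$. A standard prime-avoidance argument then supplies, for every $x\in I$, an element $r\in R$ such that $x+ra$ avoids all the $P_i$ and is therefore regular; this exhibits $I$ as generated by $I^\bullet$. For ``Marot $\Rightarrow$ $v$-Marot'' I verify condition (a) of Lemma~\ref{3.1}: for $I\in\mathcal{I}_v(R)$, the Marot hypothesis gives $I=\langle I^\bullet\rangle_R$ (the ordinary $R$-ideal generated by $I^\bullet$); since $(I^\bullet)_v$ is immediately seen to be an $R$-submodule of $\mathsf{T}(R)$ containing $I^\bullet$, the chain $I=\langle I^\bullet\rangle_R\subset(I^\bullet)_v\subset I_v=I$ forces $(I^\bullet)_v=I$.

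\medskip
For part 2, the regularity of $(R\DP S)$ follows by clearing denominators: if $S=\sum_{i=1}^n Rs_i$ with $s_i=a_i/b_i$ and $b_i\in R^\bullet$, then $c=b_1\cdots b_n\in R^\bullet$ satisfies $cS\subset R$. Part (a) is the classical Eakin--Nagata theorem. For part (b) I would argue the easy direction directly and cite the hard direction. Easy ($R\Rightarrow S$): pick a regular $b\in(R\DP S)$; for any regular ideal $J$ of $S$, the set $bJ$ is contained in $R$, is an $R$-submodule (hence ideal) of $R$, and is regular (if $c\in J^\bullet$ then $bc\in(bJ)^\bullet$). By hypothesis $bJ$ is finitely generated over $R$, so $J=b^{-1}(bJ)$ is finitely generated over $R$, and \emph{a fortiori} over $S$. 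The converse ($S\Rightarrow R$) is a regular-ideal analogue of Eakin--Nagata; the naive approach (lifting an ascending chain of regular $R$-ideals to a stationary chain of regular $S$-ideals) does not descend cleanly, because pulling back requires controlling $R$-module structures that are not obviously finite. I would invoke this implication from the work of Chang and Kang \cite{Ka00a, Ch-Ka02a}, and I expect it to be the main technical hurdle of the proposition.

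\medskip
For part 3, the hypothesis ``every regular ideal of $R$ is finitely generated'' is equivalent to ACC on regular ideals (the union of an ascending chain is regular, hence finitely generated, hence equal to a term of the chain), which in particular gives ACC on regular divisorial ideals, so $R$ is a Mori ring. That $\overline{R}$ is Krull is the ring-theoretic Mori--Nagata theorem, for which I would again cite \cite{Ka00a}. For the equality $\overline{R}=\widehat{R}$: a short direct check gives $R^\bullet\subset\overline{R}^\bullet$ and $\overline{R}^\bullet\subset\mathsf{T}(R)^\times$, whence $\mathsf{T}(R)=\mathsf{T}(\overline{R})$, so both complete integral closures are computed in the same ambient ring; combining $\overline{R}\subset\widehat{R}$ (integral elements are almost integral) with $\widehat{R}\subset\widehat{\overline{R}}=\overline{R}$ (any witness $c\in R^\bullet$ of almost integrality over $R$ is also a witness of almost integrality over $\overline{R}$, and the Krull ring $\overline{R}$ is completely integrally closed) yields $\overline{R}=\widehat{R}$. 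Finally, for the equivalence on $\mathfrak{f}$: the direction ``$\widehat{R}$ finitely generated over $R$ implies $\mathfrak{f}$ regular'' is an instance of part 2 applied with $S=\widehat{R}$; conversely, if $f\in\mathfrak{f}^\bullet$, then $f\widehat{R}\subset R$ is a regular ideal of $R$ (it contains $f=f\cdot 1$), hence finitely generated by hypothesis, and so $\widehat{R}=f^{-1}(f\widehat{R})$ is a finitely generated $R$-module.
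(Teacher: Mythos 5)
Your proposal is correct and follows essentially the same route as the paper: it cites the literature for noetherian $\Rightarrow$ Marot, for the regular-ideal Eakin--Nagata statement, and for the Krullness of $\overline{R}$, and it uses the same clearing-of-denominators argument for the regularity of $(R \DP S)$ and the same $f\widehat{R}$ argument for the final equivalence, while your direct module-chain proof of Marot $\Rightarrow$ $v$-Marot is an acceptable substitute for the paper's appeal to Lemma~\ref{2.3}.1. The only quibble is bibliographic: the regular-ideal analogue of Eakin--Nagata in 2(b) is due to Chang \cite{Ch10a} (not \cite{Ka00a, Ch-Ka02a}), and the paper attributes the Krullness of $\overline{R}$ to \cite{Ch-Ka02a}.
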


\begin{proof}
1. A proof of the first statement can be found in \cite[Theorem 7.2]{Hu88a}. Let $R$ be a Marot ring and $I\in\mathcal{I}_v(R)$. By Lemma \ref{2.3}.1, we infer that $I=I_v=({_R}\langle I^{\bullet}\rangle)_v=(I^{\bullet})_v$.

2. Statement (a) is the Theorem of Eakin-Nagata, and Statement (b) is a generalization due to Chang \cite{Ch10a}. If $S = {_R\langle} a_1b_1^{-1}, \ldots, a_sb_s^{-1} \rangle$ where $s \in \N$, $a_1, \ldots, a_s \in R$, and $b_1, \ldots, b_s \in R^{\bullet}$, then $b = b_1 \cdot \ldots \cdot b_s \in R^{\bullet}$, $b S \subset R$, and hence $b \in (R \DP S)$.

3. If $(I_n)_{n \ge 0}$ is an ascending chain of regular divisorial ideals, then the union $\cup I_n$ is a regular ideal, hence finitely generated, and thus the chain becomes stationary. The integral closure of $R$ is a Krull ring by \cite{Ch-Ka02a}. Thus $\overline R$ is completely integrally closed and hence $\overline R = \widehat R$. If $\overline R$ is a finitely generated $R$-module, then $(R : \overline R)$ is regular by 2. Suppose that $\mathfrak f$ is a regular ideal, and let $f \in \mathfrak f^{\bullet}$. Then $f \widehat R \subset R$ is a regular ideal, hence a finitely generated $R$-module by assumption and the same is true for the isomorphic $R$-module $\widehat R$.
\end{proof}

\smallskip
A ring, whose regular ideals are finitely generated, is always Mori but it need not be a $v$-Marot ring (see \cite[Example 11, Section 27]{Hu88a}).

\medskip
\begin{lemma} \label{3.4}
Let $R$ be a $v$-Marot ring,  $T=\mathsf{T}(R)$, $H=R^{\bullet}$, and $Q=\mathsf{q}(H)$.
\begin{enumerate}
\item If $I\in\mathcal{F}_v(R)$, then $(R:_T I)^{\bullet}=(H:_Q I^{\bullet})$.

\smallskip
\item If $\emptyset\not=\mathfrak{a}\subset Q$ is $H$-fractional, then $(\mathfrak a_{v_R})^{\bullet}= \mathfrak a_{v_H}$.
\end{enumerate}
\end{lemma}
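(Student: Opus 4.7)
The plan is to reduce both parts to the basic inversion identity $(R:_T\mathfrak{a})^{\bullet}=(H:_Q\mathfrak{a})$ from Lemma \ref{2.3}.3, exploiting the $v$-Marot hypothesis via the defining equality $I=(I^{\bullet})_v$ together with the identity $(X_v)^{-1}=X^{-1}$ from Lemma \ref{2.1}.1.

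For part 1, I would start from $I\in\mathcal{F}_v(R)$, invoke Lemma \ref{3.1}(b) to write $I=(I^{\bullet})_v$, and chain Lemma \ref{2.1}.1 to obtain $(R:_T I)=I^{-1}=((I^{\bullet})_v)^{-1}=(I^{\bullet})^{-1}=(R:_T I^{\bullet})$. Since $I^{\bullet}\subset Q$, Lemma \ref{2.3}.3 (applied to the subset $I^{\bullet}$) then yields $(R:_T I^{\bullet})^{\bullet}=(H:_Q I^{\bullet})$, which finishes part 1.

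For part 2, the strategy is to apply part 1 to the auxiliary ideal $I:=(R:_T\mathfrak{a})$. The key preparatory step --- and the main obstacle --- is to check that $I$ actually lies in $\mathcal{F}_v(R)$. Regularity follows from $H$-fractionality of $\mathfrak{a}$: a witness $c\in H$ with $c\mathfrak{a}\subset H$ gives $c\in I^{\bullet}$. The $R$-fractionality follows by picking any $h_1/h_2\in\mathfrak{a}$ with $h_1,h_2\in H$, so that $h_1 I\subset h_2 R\subset R$. And $I$ being a $v$-ideal is a formal consequence of Lemma \ref{2.1}.1, since $I_v=((\mathfrak{a}^{-1})^{-1})^{-1}=(\mathfrak{a}_v)^{-1}=\mathfrak{a}^{-1}=I$. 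With this in place, part 1 applied to $I$ reads $(\mathfrak{a}_{v_R})^{\bullet}=(R:_T I)^{\bullet}=(H:_Q I^{\bullet})$, and one last use of Lemma \ref{2.3}.3 identifies $I^{\bullet}=(R:_T\mathfrak{a})^{\bullet}=(H:_Q\mathfrak{a})$, so the right-hand side equals $(H:_Q(H:_Q\mathfrak{a}))=\mathfrak{a}_{v_H}$, as required. Note that the $v$-Marot hypothesis is used only once, at the very first step of part 1; every other manipulation is a formal chase of the inversion identities of Lemmas \ref{2.1} and \ref{2.3}.
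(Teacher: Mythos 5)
Your proof is correct and follows essentially the same route as the paper: part 1 rewrites $(R:_T I)$ as $(R:_T I^{\bullet})$ using the $v$-Marot equality $I=(I^{\bullet})_v$ together with the inversion identities of Lemma \ref{2.1} (the paper cites 2.1.6 where you cite 2.1.1, but these amount to the same step here), and then applies Lemma \ref{2.3}.3; part 2 applies part 1 to $(R:_T\mathfrak a)\in\mathcal F_v(R)$ exactly as the paper does, with your verification that this colon ideal lies in $\mathcal F_v(R)$ merely spelling out what the paper leaves implicit.
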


\begin{proof} 1. Let $I\in\mathcal{F}_v(R)$. It follows that $(R:_T I)^{\bullet}=(R:_T (I^{\bullet})_{v_R})^{\bullet}=(R:_T I^{\bullet})^{\bullet}=(H:_Q I^{\bullet})$ by Lemma \ref{2.1}.6 and Lemma \ref{2.3}.3.

\smallskip
2. Let $\emptyset\not=\mathfrak{a}\subset Q$ be $H$-fractional. Then $\mathfrak{a}\subset T$ is regular and $R$-fractional, hence $(R:_T\mathfrak{a})\in\mathcal{F}_v(R)$. By 1. and Lemma \ref{2.3}.3 we have $(a_{v_R})^{\bullet}=(R:_T (R:_T\mathfrak{a}))^{\bullet}=(H:_Q (R:_T\mathfrak{a})^{\bullet})=(H:_Q (H:_Q\mathfrak{a}))=\mathfrak{a}_{v_H}$.
\end{proof}

\medskip
\begin{theorem} \label{3.5}
Let $R$ be a ring and $H=R^{\bullet}$ its monoid of regular elements.
\begin{enumerate}
\item Then  $R$ is a $v$-Marot ring if and only if  the maps
  \begin{align*}
    \iota^\bullet \colon \begin{cases}
      \mathcal F_v (R) &\to \mathcal F_v (H) \setminus \{ \emptyset \} \\
      I &\mapsto I^{\bullet}
    \end{cases}
    \text{ and }
    \iota^\circ \colon \begin{cases}
      \mathcal F_v (H) \setminus \{ \emptyset \} &\to \mathcal F_v (R) \\
      \mathfrak a &\mapsto \mathfrak a_{v_R}
    \end{cases}
  \end{align*}
  are inclusion preserving semigroup isomorphisms which are inverse to each other. If this holds, then
  \begin{enumerate}
    \item $\iota^\bullet | \mathcal F_v (R)^\times \colon \mathcal F_v (R)^\times \to \mathcal F_v (H)^\times$ is a
      group isomorphism.

    \item $\iota^\bullet | \mathcal I_v (R) \colon \mathcal I_v (R) \to \mathcal I_v (H) \setminus \{ \emptyset \}$ is a semigroup isomorphism.

    \item $\iota^\bullet | v$-$\spec(R) \colon v$-$\spec(R) \to v$-$\spec(H) \setminus \{
      \emptyset \}$ is a bijection.
    \end{enumerate}

\smallskip
\item $(\widehat R)^{\bullet}=\widehat{(R^{\bullet})}$. If $R$ is completely integrally closed, then $R^{\bullet}$ is completely integrally closed, and if $R$ is a $v$-Marot ring, then the converse holds.

\smallskip
\item If $R$ is a Mori ring, then $H$ is a Mori monoid, and if $R$ is a $v$-Marot ring, then the converse holds.

\smallskip
\item If $R$ is a Krull ring, then $H$ is a Krull monoid, and if $R$ is a $v$-Marot ring, then the converse holds.
\end{enumerate}
\end{theorem}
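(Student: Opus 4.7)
The strategy is to establish Part 1 first and then deduce Parts 2--4 as consequences of the resulting semigroup isomorphism. For the forward direction of Part 1, I would first verify well-definedness: for $I \in \mathcal{F}_v(R)$ the set $I^\bullet$ is nonempty (by regularity), $H$-fractional (since $I$ is $R$-fractional), and $v_H$-closed by Lemma \ref{3.4}.2 combined with the $v$-Marot identity $I = (I^\bullet)_{v_R}$; conversely every nonempty $H$-fractional subset $\mathfrak{a}$ of $Q$ is automatically $R$-fractional and regular in $T$, so $\mathfrak{a}_{v_R} \in \mathcal{F}_v(R)$. The two compositions collapse to the identity: $\iota^\circ(\iota^\bullet(I)) = (I^\bullet)_{v_R} = I$ by $v$-Marot, and $\iota^\bullet(\iota^\circ(\mathfrak{a})) = (\mathfrak{a}_{v_R})^\bullet = \mathfrak{a}_{v_H} = \mathfrak{a}$ by Lemma \ref{3.4}.2. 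Inclusion is visibly preserved. For the semigroup property I would rewrite $(IJ)_{v_R} = (I^\bullet J^\bullet)_{v_R}$ using Lemma \ref{2.1}.4 and $v$-Marot, and then apply Lemma \ref{3.4}.2 once more to obtain $((IJ)_{v_R})^\bullet = I^\bullet \cdot_{v_H} J^\bullet$. The backward direction is an immediate unfolding of Lemma \ref{3.1}(b).

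Subparts (a) and (b) follow at once, since any semigroup isomorphism carries unit group to unit group, and for $X \subset R$ one has $X_{v_R} \subset R$ (from $1 \in (R \DP X)$), so $\iota^\circ$ sends integral $v$-ideals to integral $v$-ideals. For the prime bijection (c), the direction $\mathfrak{p} \mapsto \mathfrak{p}^\bullet$ is a one-line check. For the converse, given $\mathfrak{q}$ prime in $H$ and $x, y \in R$ with $xy \in \mathfrak{q}_{v_R}$, I would form the regular integral $v$-ideals $A = (xR + \mathfrak{q}_{v_R})_{v_R}$ and $B = (yR + \mathfrak{q}_{v_R})_{v_R}$, check directly that $A \cdot_{v_R} B \subset \mathfrak{q}_{v_R}$, and transfer to $\mathcal{I}_v(H)$ via the isomorphism of (b): there $A^\bullet \cdot_{v_H} B^\bullet \subset \mathfrak{q}$ together with primality of $\mathfrak{q}$ yields $A^\bullet \subset \mathfrak{q}$ or $B^\bullet \subset \mathfrak{q}$, hence $x \in \mathfrak{q}_{v_R}$ or $y \in \mathfrak{q}_{v_R}$.

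For Part 2, the equality $(\widehat{R})^\bullet = \widehat{H}$ is a direct unfolding of definitions, since $cx^n \in R$ with $x \in T^\bullet$ and $c \in R^\bullet$ forces $cx^n \in R \cap T^\bullet = H$; the implication $R = \widehat{R} \Rightarrow H = \widehat{H}$ is then immediate. For the converse under $v$-Marot, I argue by contradiction: if $x \in \widehat{R} \setminus R$, then $E = \{1, x\}$ is finite and contained in $\widehat{R}$, so Lemma \ref{2.3}.2 gives $E_{v_R} \subset \widehat{R}$ with $E_{v_R} \in \mathcal{F}_v(R)$; hence $(E_{v_R})^\bullet \subset (\widehat{R})^\bullet = \widehat{H} = H$, and $v$-Marot forces $E_{v_R} = ((E_{v_R})^\bullet)_{v_R} \subset H_{v_R} \subset R$, contradicting $x \in E_{v_R}$ together with $x \notin R$. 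Part 3 is a direct translation of ascending chains of regular divisorial ideals across the bijection of (1b), and Part 4 is the combination of Parts 2 and 3 since a ring is Krull precisely when it is completely integrally closed and Mori.

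The most delicate step I expect is the converse direction of the prime bijection (c): transferring $v$-ideal primality of $\mathfrak{q}$ to $\mathfrak{q}_{v_R}$ through the semigroup isomorphism is transparent, but recovering elementwise primality of $\mathfrak{q}_{v_R}$ as a ring ideal in the case where one of the witnesses $x, y$ is a zero divisor of $T$ is not monoid-theoretic and requires the $v$-ideal enlargement trick sketched above.
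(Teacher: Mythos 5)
Your overall architecture matches the paper's: establish the mutually inverse, inclusion-preserving isomorphisms via Lemma \ref{3.4}.2 and the $v$-Marot identity, read off (a) and (b), and deduce Parts 2--4. Two points deserve comment. First, your argument for the converse of the prime bijection in 1(c) is a genuinely different (and valid) route: you form $A=(xR+\mathfrak q_{v_R})_{v_R}$ and $B=(yR+\mathfrak q_{v_R})_{v_R}$, verify $A\cdot_{v_R}B\subset\mathfrak q_{v_R}$ by expanding products inside the $R$-ideal $\mathfrak q_{v_R}$, and transfer through the already-established isomorphism of (b). The paper instead fixes $z\in\mathfrak q$, uses the $v$-Marot property to write $\{x,z\}_{v_R}=E_{v_R}$ and $\{y,z\}_{v_R}=F_{v_R}$ with $E,F\subset H$, and shows $EF\subset\mathfrak q$ directly. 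Your version leans more heavily on the semigroup isomorphism and less on ad hoc $v$-generation; both handle correctly the delicate case where $x$ or $y$ is a zero divisor, which you rightly identify as the crux.

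Second, there is a gap in your treatment of Part 3 (and hence of the forward direction of Part 4). The claim ``if $R$ is a Mori ring, then $H$ is a Mori monoid'' is asserted for an \emph{arbitrary} ring $R$, whereas the bijection of (1b) that you propose to translate chains across is only available when $R$ is $v$-Marot. For general $R$ you must instead use Lemma \ref{2.3}.3: the assignment $\mathfrak a\mapsto\mathfrak a_{v_R}$ sends $\mathcal I_v(H)\setminus\{\emptyset\}$ into $\mathcal I_v(R)$ and satisfies $(\mathfrak a_{v_R})^{\bullet}=\mathfrak a$ (since $\mathfrak a\subset(\mathfrak a_{v_R})^{\bullet}\subset\mathfrak a_{v_H}=\mathfrak a$) without any Marot-type hypothesis; an ascending chain in $\mathcal I_v(H)$ therefore maps to an ascending chain in $\mathcal I_v(R)$, which stabilizes, and intersecting back with $T^{\bullet}$ shows the original chain stabilizes. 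This is exactly how the paper argues. The fix is short, but as written your sketch does not cover the non-$v$-Marot forward implication. The rest of the proposal --- well-definedness, the two compositions, the multiplicativity computation, Part 2 in both directions (your contradiction argument for the converse is the paper's argument rearranged), and Part 4 given Part 3 --- is correct.
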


\begin{proof}
We set $T = \mathsf T (R)$ and $Q=\mathsf q (H)$.

\smallskip
1. If $\iota^{\bullet}$ and $\iota^{\circ}$ have the mentioned properties, then
\[
I = (\iota^{\circ} \circ \iota^{\bullet}) (I) = \iota^{\circ} (I^{\bullet}) = (I^{\bullet})_{v_R} \quad \text{for every} \ I \in \mathcal F_v (R) \,,
\]
and thus $R$ is a $v$-Marot ring.

Conversely, suppose that $R$ is a $v$-Marot ring.
Let $I,J\in\mathcal{F}_v(R)$ and $\mathfrak{a},\mathfrak{b}\in\mathcal{F}_v(H)\backslash\{\emptyset\}$. Clearly, $I^{\bullet}\not=\emptyset$ and $I^{\bullet}$ is $H$-fractional. Therefore, Lemma \ref{3.4}.2 implies that $(I^{\bullet})_{v_H}=((I^{\bullet})_{v_R})^{\bullet}=I^{\bullet}$, and thus $\iota^{\bullet}(I)\in\mathcal{F}_v(H)\backslash\{\emptyset\}$. Note that $\mathfrak{a}\subset R$ is regular and $R$-fractional, hence $\iota^{\circ}(\mathfrak{a})=\mathfrak{a}_{v_R}\in\mathcal{F}_v(R)$. This implies that $\iota^{\bullet}$ and $\iota^{\circ}$ are well-defined maps. Clearly, both maps $\iota^{\bullet}$ and $\iota^{\circ}$ are inclusion preserving.

\smallskip
Observe that $(\iota^{\bullet}\circ\iota^{\circ})(\mathfrak{a})=(\mathfrak{a}_{v_R})^{\bullet}=\mathfrak{a}_{v_H}=\mathfrak{a}$ by Lemma \ref{3.4}.2. It is obvious that $(\iota^{\circ}\circ\iota^{\bullet})(I)=(I^{\bullet})_{v_R}=I$. Consequently, $\iota^{\bullet}$ and $\iota^{\circ}$ are mutually inverse.

\smallskip
It is clear that $\iota^{\bullet}(R)=R^{\bullet}$ and $\iota^{\circ}(R^{\bullet})=R$. Obviously, $I^{\bullet}J^{\bullet}$ is a non-empty $H$-fractional subset of $Q$. Consequently, Lemmas \ref{2.1}.4 and Lemma \ref{3.4}.2 imply that $\iota^{\bullet}(I\cdot_{v_R} J)=((IJ)_{v_R})^{\bullet}=(((I^{\bullet})_{v_R}(J^{\bullet})_{v_R})_{v_R})^{\bullet}=((I^{\bullet}J^{\bullet})_{v_R})^{\bullet}=(I^{\bullet}J^{\bullet})_{v_H}=\iota^{\bullet}(I)\cdot_{v_H}\iota^{\bullet}(J)$. Note that $\mathfrak{a}\mathfrak{b}$ is a non-empty $H$-fractional subset of $Q$. By Lemma \ref{3.4}.2 we infer that $\iota^{\circ}(\mathfrak{a}\cdot_{v_H}\mathfrak{b})=((\mathfrak{a}\mathfrak{b})_{v_H})_{v_R}=(((\mathfrak{a}\mathfrak{b})_{v_R})^{\bullet})_{v_R}=(\mathfrak{a}\mathfrak{b})_{v_R}=(\mathfrak{a}_{v_R}\mathfrak{b}_{v_R})_{v_R}=\iota^{\circ}(\mathfrak{a})\cdot_{v_R}\iota^{\circ}(\mathfrak{b})$.

\smallskip
It is straightforward to prove that $\iota^{\bullet}(\mathcal{I}_v(R))=\mathcal{I}_v(H)\setminus\{\emptyset\}$ and $\iota^{\bullet}(\mathcal{F}_v(R)^{\times})=\mathcal{F}_v(H)^{\times}$. Therefore, $\iota^{\bullet}|\mathcal{I}_v(R)\colon\mathcal{I}_v(R)\to\mathcal{I}_v(H)\setminus\{\emptyset\}$ and $\iota^{\bullet}|\mathcal{F}_v(R)^{\times}\colon\mathcal{F}_v(R)^{\times}\to\mathcal{F}_v(H)^{\times}$ are semigroup isomorphisms. Consequently, $\iota^{\bullet}|\mathcal{F}_v(R)^{\times}\colon\mathcal{F}_v(R)^{\times}\to\mathcal{F}_v(H)^{\times}$ is a group isomorphism.

\smallskip
It remains to show that $\iota^{\bullet}(v$-$\spec(R))=v$-$\spec(H)\setminus\{\emptyset\}$. First let $P\in v$-$\spec(R)$ and $x,y\in H$ be such that $xy\in\iota^{\bullet}(P)$. Then $xy\in P$, hence $x\in P\cap H=\iota^{\bullet}(P)$ or $y\in P\cap H=\iota^{\bullet}(P)$.

\smallskip
Conversely, let $\mathfrak{p}\in v$-$\spec(H)\setminus\{\emptyset\}$. It is sufficient to prove that $\iota^{\circ}(\mathfrak{p})\in v$-$\spec(R)$. Let $x,y\in R$ be such that $xy\in\iota^{\circ}(\mathfrak{p})$. Let $z\in\mathfrak{p}$. Since $R$ is a $v$-Marot ring, there are some $E,F\subset H$ such that $\{x,z\}_{v_R}=E_{v_R}$ and $\{y,z\}_{v_R}=F_{v_R}$. Note that $\mathfrak{p}_{v_R}$ is an ideal of $R$, hence $\{xy,xz,yz,z^2\}\subset\mathfrak{p}_{v_R}$. This implies that $EF\subset (EF)_{v_R}=(E_{v_R}F_{v_R})_{v_R}=(\{x,z\}_{v_R}\{y,z\}_{v_R})_{v_R}=(\{x,z\}\{y,z\})_{v_R}=\{xy,xz,yz,z^2\}_{v_R}\subset\mathfrak{p}_{v_R}$. Therefore, $EF\subset\mathfrak{p}_{v_R}\cap H=(\mathfrak{p}_{v_R})^{\bullet}=\mathfrak{p}$, and thus $E\subset\mathfrak{p}$ or $F\subset\mathfrak{p}$. It follows that $x\in E_{v_R}\subset\iota^{\circ}(\mathfrak{p})$ or $y\in F_{v_R}\subset\iota^{\circ}(\mathfrak{p})$.

\smallskip
2. By definition, we have
\[
\begin{aligned}
(\widehat R)^{\bullet} & = \widehat R \cap T^{\bullet} \\
 & = \{ x \in T^{\bullet} \mid  \text{there is a} \ c \in R^{\bullet} \ \text{such that} \ cx^n \in R \ \text{for all} \ n \in \N \} \\
 & = \{ x \in T^{\bullet} \mid  \text{there is a} \ c \in R^{\bullet} \ \text{such that} \ cx^n \in R^{\bullet} \ \text{for all} \ n \in \N \} \\
 & = \{ x \in \mathsf{q}(R^{\bullet}) \mid  \text{there is a} \ c \in R^{\bullet} \ \text{such that} \ cx^n \in R^{\bullet} \ \text{for all} \ n \in \N \} = \widehat{(R^{\bullet})} \,.
 \end{aligned}
\]
If $R$ is completely integrally closed, then $R = \widehat R$ and hence $R^{\bullet} = (\widehat R)^{\bullet} = \widehat{(R^{\bullet})}$ and hence $R^{\bullet}$ is completely integrally closed. Suppose that $R$ is a $v$-Marot ring and that $R^{\bullet}$ is completely integrally closed. Let $x\in\widehat{R}$. Then $\{1,x\}_{v_R}\in\mathcal{F}_v(R)$ and $\{1,x\}_{v_R}\subset\widehat{R}$ by Lemma \ref{2.3}.2. Therefore, $\{1,x\}_{v_R}=E_{v_R}$ for some $E\subset T^{\bullet}$. It follows that $E\subset (\widehat{R})^{\bullet}=\widehat{(R^{\bullet})}=R^{\bullet}\subset R$. Consequently, $x\in E_{v_R}\subset R$.

\smallskip
3. If $R$ is a $v$-Marot ring, then 1. implies that $R$ is a Mori ring if and only if $H$ is a Mori monoid.
Now suppose that $R$ is a Mori ring, and let $(\mathfrak{a}_n)_{n \ge 0}$ be an ascending chain of regular divisorial ideals of $H$. Note that $((\mathfrak{a}_n)_{v_R})_{n\ge 0}$ is an ascending chain of regular divisorial ideals of $R$, which becomes stationary by assumption. By Lemma \ref{2.3}.3 it follows that $\mathfrak{a}_n=((\mathfrak{a}_n)_{v_R})^{\bullet}$ for all $n\in\mathbb{N}_{0}$, hence $(\mathfrak{a}_n)_{n\ge 0}$ becomes stationary.

\smallskip
4. This follows immediately from 2. and 3.
\end{proof}

\smallskip
The fact that a Marot ring $R$ is a Krull ring if and only if the monoid $R^{\bullet}$ is a Krull monoid was first proved by Halter-Koch in \cite{HK93f}. We continue with a lemma on class groups.
The arithmetical significance of the distribution of prime divisorial ideals in the classes will be discussed after Corollary \ref{4.4}.

\medskip
\begin{corollary} \label{3.6}
Let $R$ be a ring, $H=R^{\bullet}$, $Q=\mathsf{q}(H)$, $\mathcal{H}_R=\{zR\mid z\in Q\}$, and $\mathcal{H}_H=\{zH\mid z\in Q\}$.
\begin{enumerate}
\item If $\mathcal{F}_v(R)/\mathcal{H}_R$ is finite, then $\mathcal{F}_v(H)/\mathcal{H}_H$ is finite.

\smallskip
\item If $\mathcal{F}_v(R)/\mathcal{H}_R=\{P\mathcal{H}_R\mid P\in v$-$\spec(R)\}$, then $\mathcal{F}_v(H)/\mathcal{H}_H=\{\mathfrak{p}\mathcal{H}_H\mid\mathfrak{p}\in v$-$\spec(H)\}$.

\smallskip
\item If $R$ is a $v$-Marot ring, then there is an isomorphism
\[
\mathcal{C}_v(R)=\mathcal{F}_v(R)^{\times}/\mathcal{H}_R\to\mathcal{F}_v(H)^{\times}/\mathcal{H}_H=\mathcal{}C_v(H)\,,
\]
which maps the set of classes of $\mathcal{C}_v(R)$ containing prime ideals $\mathfrak{p}\in v$-$\spec(R)$ onto the set of classes of $\mathcal{C}_v(H)$ containing prime ideals $\mathfrak{p}\in v$-$\spec(H)$.
\end{enumerate}
\end{corollary}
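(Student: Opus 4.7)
The plan centers on the assignment $\rho\colon\mathcal{F}_v(H)\setminus\{\emptyset\}\to\mathcal{F}_v(R)$ sending $\mathfrak{a}\mapsto\mathfrak{a}_{v_R}$, which is well-defined because every non-empty $H$-fractional $v$-ideal of $H$ is, viewed inside $T$, regular and $R$-fractional. The decisive technical input will be the identity $(\mathfrak{a}_{v_R})^{\bullet}=\mathfrak{a}$ for every $\mathfrak{a}\in\mathcal{F}_v(H)\setminus\{\emptyset\}$: one inclusion is immediate from $\mathfrak{a}\subset\mathfrak{a}_{v_R}\cap Q=(\mathfrak{a}_{v_R})^{\bullet}$, and the other is $(\mathfrak{a}_{v_R})^{\bullet}\subset\mathfrak{a}_{v_H}=\mathfrak{a}$ via Lemma~\ref{2.3}.3 together with the fact that $\mathfrak{a}$ is already $v_H$-closed. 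Crucially, this step does not use the $v$-Marot hypothesis, which is precisely why Parts~1 and~2 can be stated without it.

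Combining this identity with Lemma~\ref{2.1}.5 (and its straightforward monoid analogue $(c\mathfrak{b})_{v_H}=c\mathfrak{b}_{v_H}$ for $c\in Q$) shows that $\rho$ is equivariant for scaling by $Q$, so it descends to a map $\bar\rho\colon(\mathcal{F}_v(H)\setminus\{\emptyset\})/\mathcal{H}_H\to\mathcal{F}_v(R)/\mathcal{H}_R$ on class semigroups. Injectivity of $\bar\rho$ follows from the same identity: if $\mathfrak{a}_{v_R}=z\mathfrak{b}_{v_R}$ for some $z\in Q$, taking regular parts yields $\mathfrak{a}=z\mathfrak{b}$. Part~1 is then immediate, since an injection into a finite set has finite source.

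For Part~2, given $\mathfrak{a}\in\mathcal{F}_v(H)\setminus\{\emptyset\}$, I would apply the hypothesis to $\mathfrak{a}_{v_R}\in\mathcal{F}_v(R)$ to obtain $P\in v\text{-}\spec(R)$ and $z\in Q$ with $\mathfrak{a}_{v_R}=zP$, and then pass to regular parts to get $\mathfrak{a}=(\mathfrak{a}_{v_R})^{\bullet}=zP^{\bullet}$. The payoff is that $P^{\bullet}=z^{-1}\mathfrak{a}$ is automatically a $v$-ideal of $H$, being a $Q$-scalar multiple of the $v$-ideal $\mathfrak{a}$, while primality of $P^{\bullet}$ in $H$ transfers trivially from primality of $P$ in $R$ (and regularity of $P$ guarantees $P^{\bullet}\ne\emptyset$). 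Thus $[\mathfrak{a}]=[P^{\bullet}]$ is a prime class of $H$. This maneuver elegantly sidesteps verifying directly that $P^{\bullet}$ is a $v$-ideal of $H$ for an arbitrary $P\in v\text{-}\spec(R)$, which would otherwise be delicate without the $v$-Marot assumption.

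Part~3 requires no new work: under the $v$-Marot hypothesis, Theorem~\ref{3.5}.1(a) supplies the group isomorphism $\iota^{\bullet}\colon\mathcal{F}_v(R)^{\times}\to\mathcal{F}_v(H)^{\times}$, and since $\iota^{\bullet}(zR)=(zR)^{\bullet}=zH$ for $z\in Q$, it carries $\mathcal{H}_R$ onto $\mathcal{H}_H$ and so descends to an isomorphism $\mathcal{C}_v(R)\to\mathcal{C}_v(H)$. The bijection between $v\text{-}\spec(R)$ and $v\text{-}\spec(H)\setminus\{\emptyset\}$ from Theorem~\ref{3.5}.1(c) then ensures prime classes are mapped onto prime classes. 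The main obstacle throughout the argument is the identity $(\mathfrak{a}_{v_R})^{\bullet}=\mathfrak{a}$ driving Parts~1 and~2; once that is in hand, every remaining step is a brief unpacking of definitions.
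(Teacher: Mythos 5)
Your proposal is correct and follows essentially the same route as the paper: the driving identity $(\mathfrak a_{v_R})^{\bullet}=\mathfrak a$ for non-empty $\mathfrak a\in\mathcal F_v(H)$ (via Lemma \ref{2.3}.3 and $\mathfrak a\subset\mathfrak a_{v_R}\cap Q$), its $Q$-equivariance via Lemma \ref{2.1}.5, and Theorem \ref{3.5}.1 for part 3 are exactly the ingredients the authors use. Packaging parts 1 and 2 as injectivity of the induced map $\bar\rho$ on classes is only a cosmetic reorganization of the paper's choice of finite representatives.
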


\begin{proof}
1. Let $\mathcal{F}_v(R)/\mathcal{H}_R$ be finite. Observe that $\{I_{v_R}\mathcal{H}_R\mid I\in\mathcal{F}_v(H)\setminus\{\emptyset\}\}=\{I_{v_R}\mathcal{H}_R\mid I\in E\}$ for some finite $E\subset\mathcal{F}_v(H)\setminus\{\emptyset\}$. It is sufficient to prove that $\mathcal{F}_v(H)/\mathcal{H}_H=\{I\mathcal{H}_H\mid I\in E\cup\{\emptyset\}\}$. Let $J\in\mathcal{F}_v(H)\setminus\{\emptyset\}$. Then $J_{v_R}=cI_{v_R}$ for some $c\in Q$ and $I\in E$. Using Lemma \ref{2.3}.3 this implies that $J=(J_{v_R})^{\bullet}=(cI_{v_R})^{\bullet}=((cI)_{v_R})^{\bullet}=cI$. Therefore, $J\mathcal{H}_H=I\mathcal{H}_H$.

\smallskip
2. Let $\mathcal{F}_v(R)/\mathcal{H}_R=\{P\mathcal{H}_R\mid P\in v$-$\spec(R)\}$ and $I\in\mathcal{F}_v(H)$. Without restriction let $I\not=\emptyset$. Note that $I_{v_R}\in\mathcal{F}_v(R)$, and thus there is some $P\in v$-$\spec(R)$ such that $P\mathcal{H}_R=I_{v_R}\mathcal{H}_R$. Therefore, $P=cI_{v_R}$ for some $c\in Q$. Since $cI\in\mathcal{F}_v(H)$, it follows from Lemma \ref{2.3}.3 that $P^{\bullet}=(cI_{v_R})^{\bullet}=((cI)_{v_R})^{\bullet}=cI$. Using this it is straightforward to prove that $P^{\bullet}\in v$-$\spec(H)$. Therefore, $I\mathcal{H}_H=P^{\bullet}\mathcal{H}_H\in\{\mathfrak{p}\mathcal{H}_H\mid\mathfrak{p}\in v$-$\spec(H)\}$.

\smallskip
3. This follows easily from Theorem \ref{3.5}.1.
\end{proof}

\smallskip
In view of the Corollary \ref{3.6}, we identify $\mathcal{C}_v(R)$ and $\mathcal{C}_v(H)$ in the case of $v$-Marot rings. In the Krull setting, we write $\mathcal{C}(R)$ and $\mathcal{C}(H)$ for the $v$-class groups, and these are isomorphic to the respective divisor class groups.

\medskip
\begin{proposition} \label{3.7}
Let $R$ be a ring, $T=\mathsf{T}(R)$, and $\mathfrak{f}=(R:_T\widehat{R})$.
\begin{enumerate}
\item Let $I\in\mathcal{F}_v(R)$. Then $I$ is $v$-invertible if and only if $(I:_T I)=R$.

\smallskip
\item $\widehat{R}=\bigcup_{I\in\mathcal{F}_v(R)} (I:_T I)$ and if $\mathfrak{f}$ is regular, then $\widehat{R}\in\mathcal{F}_v(R)$.

\smallskip
\item $R$ is completely integrally closed if and only if $\mathcal{F}_v(R)=\mathcal{F}_v(R)^{\times}$.

\smallskip
\item If $R$ is completely integrally closed and  $\mathcal{C}_v(R)$ is finite, then $\mathcal{C}_v(R^{\bullet})$ is finite.
\end{enumerate}
\end{proposition}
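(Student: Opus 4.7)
My strategy is to establish Parts 1 and 2 directly from the ideal-theoretic identities collected in Lemmas 2.1 and 2.3, and then deduce Parts 3 and 4 as formal consequences, the latter also invoking Corollary 3.6.1. For Part 1, the plan is to rewrite the condition that $I$ is $v$-invertible, namely $(II^{-1})_v = R$, as a condition on the order $(I:_T I)$. By Lemma 2.1.1 one has $(X_v)^{-1} = X^{-1}$, so two $v$-ideals coincide iff their inverses coincide; since $R^{-1} = R$, the $v$-invertibility condition is equivalent to $(II^{-1})^{-1} = R$. Applying Lemma 2.1.3 in the form $(I^{-1}\cdot I)^{-1} = ((I^{-1})^{-1}:_T I) = (I_v:_T I) = (I:_T I)$ (using $I = I_v$) reduces $v$-invertibility to $(I:I) = R$; since $R \subset (I:I)$ is automatic, only the opposite inclusion carries content.

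For Part 2, I would prove the equality $\widehat R = \bigcup_{I \in \mathcal F_v(R)} (I:_T I)$ by separate inclusions. For $\supset$, given $x \in (I:I)$ the iterates satisfy $x^n I \subset I$, and picking $a \in I^\bullet$ (nonempty by regularity of $I$) together with $d \in R^\bullet$ such that $dI \subset R$ ($R$-fractionality of $I$), the element $c := da \in R^\bullet$ witnesses $c x^n \in R$, so $x \in \widehat R$. For $\subset$, given $x \in \widehat R$ with witness $c \in R^\bullet$, I would set $N := \sum_{n \ge 0} R x^n \subset \mathsf T(R)$, which is $R$-fractional (as $cN \subset R$) and regular (as $1 \in N$); then $N_{v_R} \in \mathcal F_v(R)$, and the containment $xN \subset N$ combined with Lemmas 2.1.2 and 2.1.5 yields $xN_{v_R} \subset (xN)_{v_R} \subset N_{v_R}$, i.e.\ $x \in (N_{v_R}:N_{v_R})$. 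For the second claim, assume $\mathfrak f$ is regular. Lemma 2.3.2 gives $(\widehat R)_{v_R} = (\mathfrak f:\mathfrak f)$; I would then check $(\mathfrak f:\mathfrak f) = \widehat R$ directly, noting $\widehat R \subset (\mathfrak f:\mathfrak f)$ because $\mathfrak f$ is an ideal of the ring $\widehat R$, and $(\mathfrak f:\mathfrak f) \subset \widehat R$ because $x\mathfrak f \subset \mathfrak f$ forces $x^n f \in R$ for any chosen $f \in \mathfrak f^\bullet$. That $\widehat R \in \mathcal F_v(R)$ then follows because $1 \in \widehat R$ makes it regular and $f\widehat R \subset R$ makes it $R$-fractional.

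Parts 3 and 4 follow as packaging. Part 3 chains Parts 1 and 2: $R = \widehat R$ iff $(I:I) \subset R$ for all $I \in \mathcal F_v(R)$ (Part 2) iff $(I:I) = R$ for every such $I$ iff every $I$ is $v$-invertible (Part 1) iff $\mathcal F_v(R) = \mathcal F_v(R)^\times$. For Part 4, since $R$ is completely integrally closed, Part 3 identifies $\mathcal C_v(R) = \mathcal F_v(R)/\mathcal H_R$, which is finite by hypothesis; Corollary 3.6.1 then makes $\mathcal F_v(R^\bullet)/\mathcal H_{R^\bullet}$ finite, and the group $\mathcal C_v(R^\bullet) = \mathcal F_v(R^\bullet)^\times/\mathcal H_{R^\bullet}$ injects into this set via the inclusion $\mathcal F_v(R^\bullet)^\times \hookrightarrow \mathcal F_v(R^\bullet)$, forcing it to be finite as well. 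The only place that requires any care is the reverse inclusion in Part 2, where one must cook up the right auxiliary divisorial ideal and pair Lemma 2.1.2 with Lemma 2.1.5 to conclude that the order $(N_{v_R}:N_{v_R})$ genuinely captures $x$; everything else is essentially routine given the preparatory material.
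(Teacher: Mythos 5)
Your proposal is correct and follows essentially the same route as the paper: part 1 via Lemma 2.1.1/2.1.3, part 2 via the divisorial closure of the powers of $x$ (your module $N$ versus the paper's set $\{x^n\mid n\in\N_0\}$ is immaterial) together with Lemma 2.3.2 for the conductor statement, and parts 3--4 as formal consequences plus Corollary 3.6.1. The only cosmetic differences are that you verify $(\mathfrak f:_T\mathfrak f)=\widehat R$ directly where the paper deduces the inclusion $(\mathfrak f:_T\mathfrak f)\subset\widehat R$ from the union formula, and that you spell out the injection $\mathcal F_v(R^\bullet)^{\times}/\mathcal H_{R^\bullet}\hookrightarrow\mathcal F_v(R^\bullet)/\mathcal H_{R^\bullet}$ where the paper cites Theorem 3.5.2.
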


\begin{proof}
1. Using Lemma \ref{2.1}.3 we obtain that $I$ is $v$-invertible if and only if $(II^{-1})_v=R$ if and only if $(II^{-1})^{-1}=R$ if and only if $(I_v:_T I)=R$ if and only if $(I:_T I)=R$.

\smallskip
2. Let $x\in\widehat{R}$ be given. Then there is some $c\in R^{\bullet}$ such that $cx^n\in R$ for all $n\in\mathbb{N}$. Set $I=\{x^n\mid n\in\mathbb{N}_0\}_v$. Note that $\{x^n\mid n\in\mathbb{N}_0\}$ is a regular $R$-fractional subset of $T$, hence $I\in\mathcal{F}_v(R)$. Moreover, $xI\subset\{x^n\mid n\in\mathbb{N}\}_v\subset I$ by Lemma \ref{2.1}.5, and thus $x\in (I:_T I)$. Conversely, let $I\in\mathcal{F}_v(R)$ and $x\in (I:_T I)$. There are some $c,d\in R^{\bullet}$ such that $cI\subset R$ and $d\in cI\cap R^{\bullet}$. Note that $x^n\in (I:_T I)$ for all $n\in\mathbb{N}$, hence $dx^n\in cx^nI\subset cI\subset R$ for all $n\in\mathbb{N}$. Consequently, $x\in\widehat{R}$.\\
Now let $\mathfrak{f}$ be regular. It follows from Lemma \ref{2.1}.1 that $\mathfrak{f}\in\mathcal{I}_v(R)$. Therefore, Lemma \ref{2.3}.2 implies that $(\widehat{R})_{v_R}=(\mathfrak{f}:_T\mathfrak{f})\subset\widehat{R}$, and thus $(\widehat{R})_{v_R}=\widehat{R}$. Consequently, $\widehat{R}\in\mathcal{F}_v(R)$.

\smallskip
3. Clearly, $R\subset (I:_T I)$ for all $I\in\mathcal{F}_v(R)$. Therefore, 1. and 2. imply that $R$ is completely integrally closed if and only if $(I:_T I)=R$ for all $I\in\mathcal{F}_v(R)$ if and only if $\mathcal{F}_v(R)=\mathcal{F}_v(R)^{\times}$.

\smallskip
4. This is an immediate consequence of Theorem \ref{3.5}.2, Corollary \ref{3.6}.1, and of 3.
\end{proof}

\medskip
\begin{proposition} \label{3.8}
Let $R$ be a ring and $R \subset S \subset \mathsf T (R)$ an overring of $R$ such that $S\in\mathcal{F}_v(R)$.
\begin{enumerate}
\item $\mathcal{F}_v(S)\subset\mathcal{F}_v(R)$.

\smallskip
\item If $R$ is a Mori ring, then $S$ is a Mori ring.

\smallskip
\item If $R$ is a $v$-Marot ring, then $S$ is a $v$-Marot ring.

\smallskip
\item If $R$ is a Mori ring such that $(R:\widehat{R})$ is regular, then $\widehat{R}$ is a Krull ring.
\end{enumerate}
\end{proposition}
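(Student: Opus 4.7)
My plan proceeds in the stated order of the four parts, with (1) being the technical crux on which (2)--(4) rest.

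For (1), let $I\in\mathcal{F}_v(S)$ and $T=\mathsf{T}(R)$. $R$-fractionality of $I$ I would obtain by composing an $S$-fractional denominator for $I$ (which, as a unit in $T$, can be written as a quotient from $R^\bullet$ using $S\subset T$) with some $c\in R^\bullet$ satisfying $cS\subset R$, provided by $S\in\mathcal{F}_v(R)$. The identity $I_{v_R}=I$ is the heart of the matter. Given $x\in I_{v_R}$ and $y\in(S:_T I)$, the trick is that for every $e\in(R:_T S)$ one has $(ey)I\subset eS\subset R$, so $ey\in(R:_T I)$ and hence $x\cdot ey\in R$; this gives $(R:_T S)\cdot xy\subset R$, i.e.\ $xy\in(R:_T(R:_T S))=S_{v_R}=S$, so $x\in I_{v_S}=I$.

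For (2), any ascending chain $(\mathfrak{a}_n)$ in $\mathcal{I}_v(S)$ is by (1) also a chain in $\mathcal{F}_v(R)$; multiplying through by some $c\in R^\bullet$ with $cS\subset R$ produces an ascending chain $(c\mathfrak{a}_n)$ in $\mathcal{I}_v(R)$ (divisoriality preserved by Lemma~\ref{2.1}.5, containment in $R$ by $cS\subset R$), which stabilizes by the Mori hypothesis on $R$.

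For (3), I would first record the monotonicity $X_{v_R}\subset X_{v_S}$ for $X\subset T$ regular and $S$-fractional: since $X\subset X_{v_S}\in\mathcal{F}_v(R)$ by (1), Lemma~\ref{2.1}.2 gives $X_{v_R}\subset(X_{v_S})_{v_R}=X_{v_S}$. Then the $v$-Marot property of $R$ applied to $I\in\mathcal{F}_v(S)\subset\mathcal{F}_v(R)$ yields $I=(I^\bullet)_{v_R}\subset(I^\bullet)_{v_S}\subset I_{v_S}=I$, which is precisely condition (b) of Lemma~\ref{3.1} for $S$.

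For (4), Proposition~\ref{3.7}.2 places $\widehat R$ in $\mathcal{F}_v(R)$, so (2) applied with $S=\widehat R$ makes $\widehat R$ a Mori ring. For complete integral closure one first checks $\mathsf{T}(\widehat R)=T$ -- regular elements of $\widehat R$ are units in $T$ because any $e\in R^\bullet$ with $e\widehat R\subset R$ turns a hypothetical zero-divisor relation in $T$ into one inside $R$. Then, given $x\in T$ with $c\in\widehat R^\bullet$ and $cx^n\in\widehat R$ for all $n\in\N$, the same $e\in R^\bullet$ with $e\widehat R\subset R$ yields $ec\in R^\bullet$ and $(ec)x^n\in R$ for all $n$, so $x\in\widehat R$. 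The main obstacle is (1); once the duality through $S_{v_R}=S$ is in hand, (2)--(4) are essentially transfer and scaling arguments.
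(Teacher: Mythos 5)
Your proof is correct and follows essentially the same route as the paper: part (1) via the identity $I_{v_R}=I_{v_S}=I$ (you unwind it by an element chase where the paper invokes Lemma \ref{2.1}.6, using $S_{v_R}=S$ in the same way), part (2) by scaling the chain into $\mathcal I_v(R)$ with a regular $c\in(R:S)$, part (3) via the monotonicity $(I^\bullet)_{v_R}\subset(I^\bullet)_{v_S}$ and Lemma \ref{3.1}(b), and part (4) by combining Proposition \ref{3.7}.2 with part (2) and the $fc$-argument for complete integral closedness. The only difference is cosmetic: you make explicit the check that $\mathsf T(\widehat R)=\mathsf T(R)$, which the paper leaves implicit.
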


\begin{proof}
We set $T = \mathsf T (R)$.

1. Let $I\in\mathcal{F}_v(S)$. We have $I_{v_R}\subset (I_{v_R})_{v_S}=I_{v_S}=I\subset I_{v_R}$ by Lemma \ref{2.1}.6, and thus $I_{v_R}=I$. Moreover, there are some $c,d\in S^{\bullet}$ such that $cI\subset S$ and $dS\subset R$. Observe that $cd\in R^{\bullet}$, $cdI\subset R$ and $I^{\bullet}\not=\emptyset$. Therefore, $I\in\mathcal{F}_v(R)$.

\smallskip
2. Let $(I_i)_{i\in\mathbb{N}}$ be an ascending chain of elements of $\mathcal{I}_v(S)$. There is some $c\in (R:_T S)^{\bullet}$. Obviously, $(cI_i)_{i\in\mathbb{N}}$ is an ascending chain of elements of $\mathcal{I}_v(S)$ such that $cI_i\subset R$ for all $i\in\mathbb{N}$. It follows by 1. that $(cI_i)_{i\in\mathbb{N}}$ is an ascending chain of elements of $\mathcal{I}_v(R)$, hence there is some $k\in\mathbb{N}$ such that $cI_i=cI_k$ for all $i\in\mathbb{N}_{\geq k}$. This immediately implies that $I_i=I_k$ for all $i\in\mathbb{N}_{\geq k}$.

\smallskip
3. Let $R$ be a $v$-Marot ring and $I\in\mathcal{F}_v(S)$. By 1. we have $I\in\mathcal{F}_v(R)$, hence $I=(I^{\bullet})_{v_R}$. It follows that $I=I_{v_S}=((I^{\bullet})_{v_R})_{v_S}=(I^{\bullet})_{v_S}$ by Lemma \ref{2.1}.6.

\smallskip
4. Let $f \in (R:_T\widehat{R})^{\bullet}$ and $x\in\widehat{\widehat{R}}$. Then there is a $c\in (\widehat{R})^{\bullet}$ such that $cx^n\in\widehat{R}$ for all $n\in\N$. Then $fc\in R^{\bullet}$ and $fcx^n\in R$ for all $n\in\N$. This implies that $x\in\widehat{R}$. Consequently, $\widehat{R}$ is completely integrally closed. It follows from Proposition \ref{3.7}.2 that $\widehat{R}\in\mathcal{F}_v(R)$, and hence 2. implies that $\widehat{R}$ is a Mori ring.
\end{proof}

\medskip
\begin{proposition} \label{3.11}
Let $R$ be a Mori ring, $T=\mathsf{T}(R)$, and $S\subset R^{\bullet}$ a multiplicatively closed subset.
\begin{enumerate}
\item If $E\subset T$ is regular and $R$-fractional, then $S^{-1}(R:_T E)=(S^{-1}R:_T S^{-1}E)$ and $S^{-1}E_v=(S^{-1}E)_{v_{S^{-1}R}}$.

\smallskip
\item If $I\in\mathcal{F}_v(S^{-1}R)$, then $I=S^{-1}J$ for some $J\in\mathcal{F}_v(R)$.

\smallskip
\item $\widehat{S^{-1}R}=S^{-1}\widehat{R}$.

\smallskip
\item If $(R:_T\widehat{R})$ is regular, then $S^{-1}(R:_T\widehat{R})=(S^{-1}R:_T\widehat{S^{-1}R})$.

\smallskip
\item If $R$ is a $v$-Marot ring, then $S^{-1}R$ is a $v$-Marot ring.

\smallskip
\item If $\mathcal{F}_v(R)/\{aR\mid a\in T^{\bullet}\}$ is finite, then $\mathcal{F}_v(S^{-1}R)/\{aS^{-1}R\mid a\in T^{\bullet}\}$ is finite.
\end{enumerate}
\end{proposition}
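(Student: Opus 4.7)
The plan is to prove the six parts in sequence, with Part~1 as the technical backbone that translates the $v$-operation between $R$ and $S^{-1}R$. Throughout one uses $\mathsf{T}(S^{-1}R)=T$ (since $S\subset R^{\bullet}\subset T^{\times}$) and $(S^{-1}R)^{\bullet}=S^{-1}R\cap T^{\bullet}$.

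For Part~1, since every $s\in S$ is invertible in $S^{-1}R$ one has $(S^{-1}R:_T S^{-1}E)=(S^{-1}R:_T E)$. By Lemma~\ref{2.1}.6, $(R:_T E)=(R:_T E_v)$, and the Mori hypothesis makes every regular divisorial ideal $v$-finitely generated (standard ACC argument applied to $\{F_v:F\subset E_v\text{ finite, }F^{\bullet}\ne\emptyset\}$), so $E_v=F_v$ for some finite $F\subset E_v$. For such finite $F=\{f_1,\dots,f_m\}$ a common-denominator argument yields $(S^{-1}R:_T F)=S^{-1}(R:_T F)$, and chaining these identities proves the first equation. The second equation follows by applying the first twice, once to $E$ and once to $(R:_T E)$; the latter is regular (contains any $c\in R^{\bullet}$ with $cE\subset R$) and $R$-fractional (any $e\in E^{\bullet}$ yields $e(R:_T E)\subset R$).

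For Part~2, given $I\in\mathcal{F}_v(S^{-1}R)$, choose $c\in(S^{-1}R)^{\bullet}$ with $cI\subset S^{-1}R$ and write $c=a/b$ with $a\in R^{\bullet}$, $b\in S$, so $aI\subset S^{-1}R$. Since $I$ is an $S^{-1}R$-module, $aI$ is $S$-saturated, whence $K:=aI\cap R$ satisfies $S^{-1}K=aI$; and $K$ is regular, since for any $i\in I^{\bullet}$ we may write $ai=r/s$ with $s\in S$, giving $r=sai\in K\cap T^{\bullet}$. Then $E:=a^{-1}K$ is regular and $R$-fractional with $S^{-1}E=I$, so by Part~1 the ideal $J:=E_{v_R}\in\mathcal{F}_v(R)$ satisfies $S^{-1}J=(S^{-1}E)_{v_{S^{-1}R}}=I$. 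For Part~3 the inclusion $S^{-1}\widehat{R}\subset\widehat{S^{-1}R}$ is immediate. Conversely, Proposition~\ref{3.7}.2 writes $\widehat{S^{-1}R}=\bigcup_{I\in\mathcal{F}_v(S^{-1}R)}(I:_T I)$; for $x\in(I:_T I)$ write $I=S^{-1}J$ by Part~2, use Mori to get $J=\{j_1,\dots,j_n\}_{v_R}$, and for each $i$ pick $s_i\in S$ with $s_ixj_i\in J$; then $s:=s_1\cdots s_n\in S$ satisfies $sx\{j_1,\dots,j_n\}\subset J$, which upgrades via Lemma~\ref{2.1}.5 and~\ref{2.1}.2 to $sxJ\subset J$, giving $sx\in\widehat{R}$ by Proposition~\ref{3.7}.2 and hence $x=(sx)/s\in S^{-1}\widehat{R}$.

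The remaining parts are routine consequences. Part~4 combines Part~3 with Part~1 applied to $E=\widehat{R}$, which is regular and $R$-fractional by Proposition~\ref{3.7}.2 when $\mathfrak{f}$ is regular. For Part~5, given $I\in\mathcal{F}_v(S^{-1}R)$, write $I=S^{-1}J$ by Part~2, apply the $v$-Marot property $J=(J^{\bullet})_{v_R}$, push through Part~1, and note $S^{-1}J^{\bullet}\subset I^{\bullet}$. For Part~6, the map $J\mapsto S^{-1}J$ is a surjection $\mathcal{F}_v(R)\to\mathcal{F}_v(S^{-1}R)$ (surjective by Part~2, well-defined into divisorial ideals by Part~1) sending $aR\mapsto aS^{-1}R$ for $a\in T^{\bullet}$, hence induces a surjection of the quotients modulo principal ideals, yielding the finiteness. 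The main obstacle is the Mori-driven reduction from arbitrary to finite generating sets of divisorial ideals, invoked in Parts~1 and~3; a secondary subtlety is ensuring in Part~2 that $E$ is genuinely $R$-fractional (not merely $S^{-1}R$-fractional), which the intersection $K=aI\cap R$ achieves cleanly.
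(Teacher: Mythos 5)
Your proposal is correct and follows essentially the same route as the paper: Mori finite $v$-generation plus common denominators for Part 1, intersecting a suitable multiple of $I$ with $R$ for Part 2, and Proposition \ref{3.7}.2 for Part 3, with Parts 4--6 as formal consequences. One small repair: in Part 1 choose the finite set $F$ inside $E$ itself (the ACC argument gives this just as easily), since with $F\subset E_v$ the needed inclusion $(S^{-1}R:_T E)\subset (S^{-1}R:_T F)$ still requires passing from $xE\subset S^{-1}R$ to $xE_{v_R}\subset S^{-1}R$, which again calls for a finite regular subset of $E$ and a common denominator.
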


\begin{proof}
1. Let $E\subset T$ be regular and $R$-fractional, and let $x\in S^{-1}(R:_T E)$. Then $xs\in (R:_T E)$ for some $s\in S$. Let $z\in S^{-1}E$. Then $zt\in E$ for some $t\in S$. We have $stxz\in R$, hence $xz\in S^{-1}R$, and thus $x\in (S^{-1}R:_T S^{-1}E)$. Conversely, let $x\in (S^{-1}R:_T S^{-1}E)$. Then $xE\subset S^{-1}R$. Note that $E_v\in\mathcal{F}_v(R)$, hence there is some finite regular $F\subset E$ such that $F_v=E_v$. We obtain that $xF\subset S^{-1}R$. This implies that $xtF\subset R$ for some $t\in S$. It follows that $xtE\subset xtE_v=xtF_v\subset (xtF)_v\subset R$. Therefore, $xt\in (R:_T E)$, and thus $x\in S^{-1}(R:_T E)$.\\
Consequently, $S^{-1}E_v=S^{-1}(R:_T (R:_T E))=(S^{-1}R:_T S^{-1}(R:_T E))=(S^{-1}R:_T (S^{-1}R:_T S^{-1}E))=(S^{-1}E)_{v_{S^{-1}R}}$.

\smallskip
2. Let $I\in\mathcal{F}_v(S^{-1}R)$. Then $cI\in\mathcal{I}_v(S^{-1}R)$ for some $c\in T^{\bullet}$. Set $J=cI\cap R$. Observe that $cI=S^{-1}J$ and $J$ is a regular subset of $R$. Let $x\in J_v$. Then $x(R:_T J)\subset R$. Therefore, 1. implies that $x(S^{-1}R:_T cI)=x(S^{-1}R:_T S^{-1}J)=xS^{-1}(R:_T J)\subset S^{-1}R$. Consequently, $x\in (cI)_{v_{S^{-1}R}}\cap R=J$. This implies that $J\in\mathcal{I}_v(R)$, hence $I=S^{-1}(c^{-1}J)$ and $c^{-1}J\in\mathcal{F}_v(R)$.

\smallskip
3.  Let $x\in\widehat{S^{-1}R}$ be given. It follows by Proposition \ref{3.7}.2 that $x\in (I:_T I)$ for some $I\in\mathcal{F}_v(S^{-1}R)$. There is some $J\in\mathcal{F}_v(R)$ such that $I=S^{-1}J$ by 2. We infer by 1. and Proposition \ref{3.7}.2 that $x\in S^{-1}(J:_T J)\subset S^{-1}\widehat{R}$. Conversely, let $x\in S^{-1}\widehat{R}$. Then $xt\in\widehat{R}$ for some $t\in S$. Consequently, there is some $c\in R^{\bullet}$ such that $ct^nx^n\in R$ for all $n\in\mathbb{N}$. We obtain that $c\in (S^{-1}R)^{\bullet}$ and $cx^n\in S^{-1}R$ for all $n\in\mathbb{N}$. Therefore, $x\in\widehat{S^{-1}R}$.

\smallskip
4. Let $(R:_T\widehat{R})$ be regular. Then $\widehat{R}$ is regular and $R$-fractional. Consequently, $S^{-1}(R:_T\widehat{R})=(S^{-1}R:_T S^{-1}\widehat{R})=(S^{-1}R:_T\widehat{S^{-1}R})$ by 1. and 3.

\smallskip
5. Let $R$ be a $v$-Marot ring and $I\in\mathcal{F}_v(S^{-1}R)$. It follows by 2. that $I=S^{-1}J$ for some $J\in\mathcal{F}_v(R)$. By 1. we have $I=S^{-1}(J^{\bullet})_v=(S^{-1}J^{\bullet})_{v_{S^{-1}R}}$. Since $S^{-1}J^{\bullet}\subset I^{\bullet}$ we infer that $I=(I^{\bullet})_{v_{S^{-1}R}}$.

\smallskip
6. It follows from 1. and 2. that $f:\mathcal{F}_v(R)\rightarrow\mathcal{F}_v(S^{-1}R)$ defined by $f(I)=S^{-1}I$ for all $I\in\mathcal{F}_v(R)$ is a surjective map. Using this it is straightforward to prove that $\overline{f}:\mathcal{F}_v(R)/\{aR\mid a\in T^{\bullet}\}\rightarrow\mathcal{F}_v(S^{-1}R)/\{aS^{-1}R\mid a\in T^{\bullet}\}$ defined by $\overline{f}(I\{aR\mid a\in T^{\bullet}\})=f(I)\{aS^{-1}R\mid a\in T^{\bullet}\}$ is a surjective map. This immediately implies the assertion.
\end{proof}

\bigskip
\section{C-monoids and C-rings} \label{4}
\bigskip

In this section we define C-rings as commutative rings whose monoid of regular elements is a C-monoid. Originally, C-monoids and C-domains have been introduced in order to study the arithmetic of non-integrally closed higher-dimensional  noetherian domains, and since then their arithmetic has been studied in detail. In Proposition \ref{4.2} we summarize some arithmetical finiteness results for C-monoids. The main result in this section is Theorem \ref{4.8} (together with Corollary \ref{4.9}). It states that a $v$-Marot Mori ring $R$ with regular conductor $\mathfrak f = (R : \widehat R)$, for which the residue class ring $R/\mathfrak f$ and the class group $\mathcal C (\widehat R)$ are both finite, is a C-ring. In particular, this implies that all arithmetical finiteness results of Proposition \ref{4.2} are satisfied.

In order to give the definition of C-monoids we need to recall the concept of class semigroups which are a refinement of ordinary class groups in commutative algebra (a detailed presentation can be found in \cite[Chapter 2]{Ge-HK06a}).
Let $F$ be a monoid and $H \subset F$ a submonoid. Two elements $y, y' \in F$
are called $H$-equivalent, if $y^{-1}H \cap F = {y'}^{-1} H \cap
F$. $H$-equivalence is a congruence relation on $F$. For $y \in
F$, let $[y]_H^F$ denote the congruence class of $y$, and let
\[
\mathcal C (H,F) = \{ [y]_H^F \mid y \in F \} \quad \text{and}
\quad \mathcal C^* (H,F) = \{ [y]_H^F \mid y \in (F \setminus
F^{\times}) \cup \{1\} \}.
\]
Then $\mathcal C (H,F)$ is a semigroup with unit element $[1]_H^F$
(called the {\it class semigroup} of $H$ in $F$) and $\mathcal C^* (H,F)
\subset \mathcal C (H,F)$ is a subsemigroup (called the reduced
class semigroup of $H$ in $F$). It follows from the very definitions that, for every subset $T \subset F$, there is a bijective map
\begin{equation} \label{bijection}
\{ [y]_H^F \mid y \in T\} \to \{ y^{-1}H \cap F \mid y \in T\}, \quad \text{given by} \ [y]_H^F \mapsto y^{-1}H \cap F
\end{equation}
If $\mathcal C (H,F)$ is a torsion
group, then $H \subset F$ is saturated and cofinal, and if $H
\subset F$ is saturated and cofinal, then $\mathcal C (H, F) = \mathsf q (F)/\mathsf q (H)$.

\medskip
\begin{definition} \label{4.1}~

\begin{enumerate}
\item Let $H$ be a monoid. If $H$ is a submonoid of a factorial monoid $F$
      such that $H \cap F^\times = H^\times$ and $\mathcal{C}^*(H,F)$ is finite,
      then $H$ is called a {\it {\rm C}-monoid}.

\smallskip
\item A ring $R$ is called a \textit{{\rm C}-ring} if
      $R^{\bullet}$ is a {\rm C}-monoid.
\end{enumerate}
\end{definition}

\smallskip
Next we gather some arithmetical concepts which are required to present the main arithmetical finiteness results of C-monoids.
Let $H$ be a Mori monoid. Then every non-unit $a \in H$ can be written as a finite product of irreducible elements, say $a = u_1 \cdot \ldots \cdot u_k$, where $u_1, \ldots, u_k \in H$ are irreducible and $k \in \N$ is called the length of the factorization. Since $H$ is a Mori monoid, the set $\mathsf L (a) \subset \N$ of all possible factorization lengths is finite (\cite[Theorem 2.2.9]{Ge-HK06a}), and $\mathsf L (a)$ is called the {\it set of lengths} of $a$. It is convenient to set $\mathsf L (a) = \{0\}$ for $a \in H^{\times}$. For $k \in \N$, the set $\mathcal U_k (H)$ denotes the {\it union of sets of lengths} $\mathsf L (a)$ (over all $a \in H$) with $k \in \mathsf L (a)$. Unions of sets of lengths can either be finite or infinite. For a finite set $L = \{m_1, \ldots, m_k \} \subset \Z$ with $k \in \N$ and $m_1 < \ldots < m_k$, we denote by $\Delta (L)$ the set of (successive) distances of $L$, that is $\Delta (L) = \{ m_{\nu+1} - m_{\nu} \mid \nu \in [1, k-1] \}$. Then
\[
\Delta (H) = \bigcup_{a \in H} \Delta \big( \mathsf L (a) \big)
\]
denotes the {\it set of distances} of $H$. By definition, $\Delta (H) = \emptyset$ if and only if $|\mathsf L (a) |=1$ for all $a \in H$ if and only if $\mathcal U_k (H) = \{k\}$ for all $k \in \N$. If there is some $a \in H$ with $|\mathsf L (a)| > 1$, then $|\mathsf L (a^n)| > n$ for every $n \in \N$.

In C-monoids sets of lengths and their unions have a well-defined structure. We do not repeat here the rather involved definitions of AAMPs (almost arithmetical multiprogression), AAPs (almost arithmetical progression), or of the catenary degree (they can be found in \cite{Ge-HK06a}). Roughly speaking, AAMPs and AAPs are generalized arithmetical progressions which are controlled by several parameters, and Proposition \ref{4.2} states that all these parameters are globally bounded.

\medskip
\begin{proposition}[\bf Arithmetic Properties] \label{4.2}
Let $H$ be a {\rm C}-monoid.
\begin{enumerate}
\item $H$ has finite catenary degree and finite set of distances $\Delta (H)$.

\smallskip
\item There is a constant $M^* \in \N$ such that for every $a \in H$ the set of lengths $\mathsf L (a)$ is an {\rm AAMP}  with difference $d \in \Delta (H)$ and bound $M^*$.

\smallskip
\item There are constants $k^*, M^* \in \N$ such that for all $k \ge k^*$ the union of sets of lengths $\mathcal U_k (H)$ is an {\rm AAP} with difference $d$ and bound $M^*$.
\end{enumerate}
\end{proposition}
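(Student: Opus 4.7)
The plan is to deduce all three assertions by transferring the arithmetic of $H$ to a finitely generated auxiliary monoid, following the standard machinery developed for C-monoids in \cite[Chapters 2--4]{Ge-HK06a}. By definition $H$ embeds into a factorial monoid $F$ with $H \cap F^{\times}=H^{\times}$ and with finite reduced class semigroup $\mathcal{C}^{*}(H,F)$. The first step is to build, out of $\mathcal{C}^{*}(H,F)$ and the prime structure of $F$, a T-block monoid $B$ together with a transfer homomorphism $\theta\colon H_{\mathrm{red}}\to B$; because the class semigroup is finite, $B$ is finitely generated.

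The second step is to invoke the three known arithmetic finiteness results for finitely generated monoids of this type: such a monoid has finite catenary degree and finite set of distances; every set of lengths is an AAMP with globally bounded difference and bound; and the unions $\mathcal{U}_k$ eventually become AAPs with globally bounded parameters. A transfer homomorphism preserves sets of lengths and thereby transports the AAMP and AAP structure to $H$, while it controls the catenary degree and the set of distances up to an additive constant coming from the catenary degree in the fibres of $\theta$. Combining these facts with the results for $B$ yields assertions (1), (2), and (3) for $H$.

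The main obstacle is the construction and analysis of the transfer homomorphism: one must verify that $\theta$ preserves enough of the arithmetic of $H$ to carry the structural results for $B$ back to $H$ with only bounded error, and that the fibre contributions are themselves uniformly bounded in terms of $\mathcal{C}^{*}(H,F)$. Since this entire machinery is carried out in full detail in \cite{Ge-HK06a}, the proof in the present paper is expected to reduce to assembling the relevant references; this is consistent with the proposition being announced in the introduction as a summary of arithmetical finiteness results rather than as a new theorem of this article.
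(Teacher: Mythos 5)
Your proposal is correct and matches the paper exactly: the paper's proof of Proposition \ref{4.2} consists solely of citations, namely \cite[Theorems 3.3.4 and 4.6.6]{Ge-HK06a} for parts 1 and 2 and \cite[Theorems 3.10 and 4.2]{Ga-Ge09b} for part 3. Your sketch of the underlying transfer-homomorphism machinery is a fair description of how those cited results are themselves proved, but none of it is reproduced in this paper.
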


\begin{proof}
1. and 2. can be found in \cite[Theorems 3.3.4 and 4.6.6]{Ge-HK06a}. For 3. see \cite[Theorems 3.10 and 4.2]{Ga-Ge09b}.
\end{proof}

\smallskip
More on the arithmetic of C-monoids can be found in \cite{Fo-Ge05, Fo-Ha06a}. We switch to their  main algebraic properties.

\medskip
\begin{proposition}[\bf Algebraic Properties] \label{4.3}
Let $H$ be a {\rm C}-monoid.
Then $H$ is a Mori monoid with $(H \DP \widehat H) \ne \emptyset$, and the complete integral closure $\widehat H$ is a Krull monoid with finite class group $\mathcal C (\widehat H)$.
\end{proposition}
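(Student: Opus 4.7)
The plan is to exploit the defining embedding $H \subset F$ into a factorial monoid with $H \cap F^{\times} = H^{\times}$ and finite reduced class semigroup $\mathcal{C}^*(H,F)$, and derive all four assertions from this data. First I would locate $\widehat H$ inside $F$: since $F$ is factorial, hence completely integrally closed, any $x \in \widehat H$ satisfies $cx^n \in H \subset F$ for some $c \in H$ and all $n \in \N$, which together with $x \in \mathsf q(H) \subset \mathsf q(F)$ forces $x \in F$. The containment $\widehat H \subset F$ is the key geometric fact used throughout.

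The second and most delicate step is to extract a conductor element $c \in H^{\bullet}$ with $c \widehat H \subset H$. Finiteness of $\mathcal{C}^*(H,F)$ forces every class in it to have an idempotent power, which via the bijection $[y]_H^F \leftrightarrow y^{-1}H \cap F$ produces elements that absorb entire classes back into $H$. Choosing representatives of the finitely many nonzero classes, raising them to a common exponent where all idempotent relations hold, and multiplying, one should be able to combine them into a single $c \in H^{\bullet}$ with $c\widehat H \subset H$.

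Once the conductor is available, the remaining assertions follow by standard Krull/Mori machinery. For $\widehat H$ being Krull: the iterated complete integral closure satisfies $\widehat{\widehat H} = \widehat H$ by a direct argument (analogous to Proposition \ref{3.8}.4 in the ring case, where the existence of a regular conductor $\mathfrak f$ was used in exactly this way), so $\widehat H$ is completely integrally closed. The Mori property on $\widehat H$ is inherited from the ambient factorial $F$ once one shows that $\widehat H \subset F$ is saturated, and the class group $\mathcal C(\widehat H)$ appears as a quotient of $\mathsf q(F)/\mathsf q(\widehat H)$; since two primes of $F$ lying in the same $H$-equivalence class map to the same class modulo $\mathsf q(\widehat H)$, finiteness of $\mathcal{C}^*(H,F)$ forces $\mathcal C(\widehat H)$ to be finite. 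Finally, to see that $H$ itself is Mori, any ascending chain $(I_n)_{n \ge 0}$ of divisorial ideals of $H$ pushes forward to an ascending chain $(I_n \widehat H)_{n \ge 0}$ in the Krull monoid $\widehat H$, which stabilizes; the residual data modulo $\mathfrak f = (H : \widehat H)$ is controlled by the finite class semigroup, forcing the original chain to stabilize as well.

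The main obstacle is the second step: converting the global finiteness of $\mathcal{C}^*(H,F)$ into a single uniform element $c \in H$ that simultaneously multiplies the entirety of $\widehat H$ into $H$. This requires a careful analysis of the idempotent structure in the finite commutative semigroup $\mathcal{C}^*(H,F)$ together with the defining property of $H$-equivalence, and it is here that the hypothesis $H \cap F^{\times} = H^{\times}$ is essential to prevent the absorption argument from losing control on units. Everything afterwards is essentially bookkeeping built on top of this conductor.
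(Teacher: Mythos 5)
First, note that the paper does not actually prove Proposition \ref{4.3}: its ``proof'' is a citation to \cite[Theorems 2.9.11 and 2.9.13]{Ge-HK06a}, which together with their supporting lemmas occupy a substantial part of Section 2.9 of that book. Your outline does follow the broad strategy of that cited proof --- locate $\widehat H$ inside the factorial monoid $F$ (your first step is correct and complete: $cx^n\in H\subset F$ for all $n$ forces $x\in\widehat F=F$), exploit eventual periodicity of the sequences $[y^n]_H^F$ in the finite semigroup $\mathcal C^*(H,F)$, produce a conductor, and then deduce the Krull property, the finiteness of $\mathcal C(\widehat H)$ via $\mathsf q(F)/\mathsf q(\widehat H)$, and the Mori property of $H$. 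So the skeleton is right.

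However, the crucial step is not carried out, and the recipe you offer for it would not work as stated. Multiplying one representative from each of the finitely many classes, raised to a common ``idempotent'' exponent, produces an element $c$ with finite support in $F$, but you give no reason why $c\,\widehat H\subset H$: the observation that $y\cdot(y^{-1}H\cap F)\subset H$ holds trivially for \emph{every} $y$, not just idempotent classes, so it carries no information, and an element $x\in\widehat H$ may involve primes of $F$ outside the support of $c$ and outside the supports of all your chosen representatives. What the actual proof establishes first (Theorem 2.9.11 of \cite{Ge-HK06a}) is a uniform structure theorem: there exist $\alpha\in\N$ and a subgroup $V\subset F^{\times}$ of finite index such that membership in $H$ is $V$-invariant and periodic in each prime exponent beyond $\alpha$, and that only finitely many primes of $F$ behave non-trivially with respect to $H$; only then can one write down a conductor element and describe $\widehat H$ explicitly (Theorem 2.9.13), from which saturation of $\widehat H$ in $F$, the Krull property, and the finiteness of $\mathcal C(\widehat H)$ follow. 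Your final step is similarly only a gesture, and the phrase ``the residual data modulo $\mathfrak f$ is controlled by the finite class semigroup'' hides a real issue: unlike in Theorem \ref{4.8}, where finiteness of $R/\mathfrak f$ is an \emph{extra hypothesis}, the C-monoid axioms do not make $\widehat H/\mathfrak f$ finite in any sense, so the Mori property of $H$ cannot be obtained by a finite-residue argument; in \cite{Ge-HK06a} it is again deduced from the $\alpha$-periodicity. In short: the plan identifies the correct intermediate goals, but the two hard steps (the uniform conductor and the ACC on divisorial ideals of $H$) are asserted rather than proved, and the specific mechanism proposed for the conductor is not sound.
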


\begin{proof}
See \cite[Theorems 2.9.11 and 2.9.13]{Ge-HK06a}.
\end{proof}

\smallskip
\begin{corollary} \label{4.4}
Let $R$ be a ring and $H=R^{\bullet}$.
\begin{enumerate}
\item Let $R$ be a Krull ring. If every class of $\mathcal C (R)$ contains a prime divisorial ideal, then every class of $\mathcal C (H)$ contains a prime divisorial ideal. If $\mathcal C (R)$ is finite, then $\mathcal C (H)$ is finite and $R$ is a {\rm C}-ring.

\smallskip
\item Let $R$ be a $v$-Marot {\rm C}-ring. Then $R$ is a Mori ring,  $(R \DP \widehat R)$ is regular, and $\widehat R$ is a $v$-Marot Krull ring with finite class group.
\end{enumerate}
\end{corollary}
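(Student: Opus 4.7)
Since $R$ is a Krull ring, Theorem \ref{3.5}.4 yields that $H = R^\bullet$ is a Krull monoid. Because both $R$ and $H$ are completely integrally closed, Proposition \ref{3.7}.3 gives $\mathcal F_v(R) = \mathcal F_v(R)^\times$ and $\mathcal F_v(H) = \mathcal F_v(H)^\times$, so the semigroups $\mathcal F_v(R)/\mathcal H_R$ and $\mathcal F_v(H)/\mathcal H_H$ coincide with $\mathcal C(R)$ and $\mathcal C(H)$ respectively. Corollary \ref{3.6}.2 then gives the statement about prime ideals and Corollary \ref{3.6}.1 the finiteness of $\mathcal C(H)$. To conclude that $R$ is a C-ring I would invoke the standard fact from \cite{Ge-HK06a} that a Krull monoid with finite class group is a C-monoid: one embeds $H$ into the factorial monoid $F = H^\times \times \mathcal F(\mathfrak X)$, where $\mathfrak X$ is the set of height-one $v$-primes of $H$, so that $H \cap F^\times = H^\times$ and $\mathcal C^*(H,F)$ injects into the finite group $\mathcal C(H)$.

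\textbf{Part (2).} By definition $H = R^\bullet$ is a C-monoid, so Proposition \ref{4.3} provides that $H$ is Mori, $\mathfrak f_H := (H :_Q \widehat H) \ne \emptyset$, and $\widehat H$ is a Krull monoid with finite class group. Since $R$ is $v$-Marot, Theorem \ref{3.5}.3 then forces $R$ to be a Mori ring, and Theorem \ref{3.5}.2 gives $(\widehat R)^\bullet = \widehat H$.

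The crux is to show that $\mathfrak f := (R :_T \widehat R)$ is regular, and my plan is to prove the identity $\widehat R = (\widehat H)_{v_R}$. For the inclusion $\widehat R \subset (\widehat H)_{v_R}$, let $x \in \widehat R$: Lemma \ref{2.3}.2 gives $\{1,x\}_{v_R} \subset \widehat R$, the set $\{1,x\}_{v_R}$ belongs to $\mathcal F_v(R)$ (it is regular and $R$-fractional), and the $v$-Marot hypothesis produces $\{1,x\}_{v_R} = E_{v_R}$ with $E := \{1,x\}_{v_R}^\bullet \subset \widehat R \cap T^\bullet = \widehat H$, so $x \in E_{v_R} \subset (\widehat H)_{v_R}$. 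For the reverse inclusion I pick $c \in \mathfrak f_H \subset R^\bullet$: then $c \widehat H \subset R$, hence $c(\widehat H)_{v_R} = (c\widehat H)_{v_R} \subset R_{v_R} = R$ by Lemma \ref{2.1}.5 and Lemma \ref{2.1}.2. Since $\widehat H$ is multiplicatively closed, two applications of Lemma \ref{2.1}.4 show that $(\widehat H)_{v_R}$ is stable under ordinary multiplication; thus for every $y \in (\widehat H)_{v_R}$ all powers $y^n$ lie in $(\widehat H)_{v_R}$, whence $c y^n \in R$ for all $n \in \N$ and $y \in \widehat R$.

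Once $\widehat R = (\widehat H)_{v_R}$ is established, $\mathfrak f$ is regular (it contains the chosen $c$) and $\widehat R \in \mathcal F_v(R)$. Proposition \ref{3.8}.4 then yields that $\widehat R$ is a Krull ring, Proposition \ref{3.8}.3 applied to the overring $\widehat R \in \mathcal F_v(R)$ yields that $\widehat R$ is $v$-Marot, and Corollary \ref{3.6}.3 applied to $\widehat R$ identifies $\mathcal C(\widehat R) \cong \mathcal C(\widehat H)$, which is finite by Proposition \ref{4.3}. The only substantive obstacle in this plan is the identity $\widehat R = (\widehat H)_{v_R}$; the remaining steps are direct invocations of the machinery developed above.
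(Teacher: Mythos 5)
Your proposal is correct and follows essentially the same route as the paper: part (1) via Theorem \ref{3.5}, Corollary \ref{3.6}, Proposition \ref{3.7} and the standard fact that a Krull monoid with finite class group is a C-monoid; part (2) via Proposition \ref{4.3}, Theorem \ref{3.5}, Proposition \ref{3.8} and Corollary \ref{3.6}.3, with the key step for the regularity of the conductor being exactly the paper's computation (Lemma \ref{2.3}.2 plus the $v$-Marot property applied to $\{1,y\}_{v_R}=E_{v_R}$ with $E\subset(\widehat R)^{\bullet}=\widehat{H}$). Your packaging of that step as the identity $\widehat R=(\widehat H)_{v_R}$ is a harmless and correct elaboration; note that only the inclusion $\widehat R\subset(\widehat H)_{v_R}$ is actually needed for the conductor, since $\widehat R\in\mathcal F_v(R)$ then follows from Proposition \ref{3.7}.2.
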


\begin{proof}
1. The first statement follows from Corollary \ref{3.6}.2 and from Proposition \ref{3.7}.3. Let $R$ be a Krull ring with finite class group. Then $H$ is a Krull monoid by Theorem \ref{3.5}, and $H$ has finite class group by Proposition \ref{3.7}.4. Thus $H$ is a C-monoid by \cite[Theorem 2.9.12]{Ge-HK06a}.

\smallskip
2. Let $T$ be the total quotient ring  of $R$. It follows from Proposition \ref{4.3} that $H$ is a Mori monoid, $(H:_{T^{\bullet}}\widehat{H})\not=\emptyset$, and  that $\widehat H$ is Krull with finite class group $\mathcal{C}_v(\widehat{H})$. By Theorem \ref{3.5}.2, $R$ is a Mori ring and $(\widehat{R})^{\bullet}=\widehat{H}$. Proposition \ref{3.8} implies that $\widehat R$ is a $v$-Marot Krull ring,  and by Corollary \ref{3.6}.3 we obtain that $\mathcal{C}_v(\widehat{R})\cong\mathcal{C}_v((\widehat{R})^{\bullet})=\mathcal{C}_v(\widehat{H})$ is finite. In order to verify that $(R \DP \widehat R)$ is regular, we choose an element  $x\in (H:_{T^{\bullet}}\widehat{H})$ and have to  show that $x\in (R:_T\widehat{R})$. Let $y\in\widehat{R}$. By Lemma \ref{2.3}.2 it follows that $\{1,y\}_{v_R}\subset\widehat{R}$. Moreover, $\{1,y\}_{v_R}\in\mathcal{F}_v(R)$, hence there is some $E\subset T^{\bullet}$ such that $\{1,y\}_{v_R}=E_{v_R}$. Observe that $E\subset (\widehat{R})^{\bullet}=\widehat{H}$, and thus $xE\subset R^{\bullet}\subset R$. Therefore, $xy\in xE_{v_R}=(xE)_{v_R}\subset R$.
\end{proof}

\smallskip
Let $H$ be a Krull monoid with class group $G$ and let $G_P \subset G$ denote the set of classes containing prime divisorial ideals. Then there is a transfer homomorphism $\theta \colon H \to \mathcal B (G_P)$, where $\mathcal B (G_P)$ is the monoid of zero-sum sequences over $G_P$. Transfer homomorphisms preserve sets of lengths (and other arithmetical invariants), and they allow to show that  the finiteness of $G_P$ induces arithmetical finiteness results (including all what is mentioned in Proposition \ref{4.2}). If moreover  $G_P = G$ is finite, then there is a variety of most precise arithmetical information (see \cite{Ge-HK06a} and the surveys \cite{Ge09a, Sc09b}). Corollary \ref{4.4} reveals that all these results apply to Krull rings and, by Corollary \ref{3.6}.3, the connection is most close in the $v$-Marot case, since in that case  there is an isomorphism between the class groups and a bijection between the set of classes containing prime divisorial ideals. In particular, we regain a classical result by Anderson and Markanda (\cite[Theorem 3.6]{An-Ma85a} and \cite{An-Ma85b}), saying that $R^{\bullet}$ is factorial if and only if $R$ is a Krull ring with trivial class group. We discuss one example.

\medskip
\begin{example} \label{4.5}
Let $R$ be a ring. Then $R[X]$ is a Krull ring  if and only if $R$ is a finite direct product of Krull domains, say $R = R_1 \times \ldots \times R_n$ (\cite[Theorem 3]{Ch01a}). Suppose this holds true. For every Krull domain $R_{\nu }$, with $\nu \in [1,n]$, we have $\mathcal C (R_{\nu}) \cong \mathcal C (R_{\nu}[X])$, and every class of $\mathcal C (R_{\nu}[X])$ contains a prime divisorial ideal (\cite[Theorem 14.3]{Fo73}). Moreover,  $R[X] \cong R_1[X] \times \ldots \times R_n[X]$ is a finite direct product of Krull domains again. For each two Krull monoids $H_1$ and $H_2$, there is an isomorphism  $f \colon \mathcal C (H_1) \oplus \mathcal C (H_2) \to \mathcal C (H_1 \times H_2)$ and if, for $\nu \in [1,2]$, $G_{\nu} \subset \mathcal C (H_{\nu})$ is the set of classes of $\mathcal C (H_{\nu})$ containing prime divisorial ideals, then $f (G_1 \uplus G_2) \subset \mathcal C (H_1 \times H_2)$ is the set of classes containing prime divisorial ideals. Thus, if $R$ has finite class group, then $R[X]$ is a Krull ring with finite class group. Using induction we infer that, for each $s \in \N$,  $R[X_1, \ldots, X_s]$ is a Krull ring with finite class group and hence a C-ring. Moreover, $R[X_1, \ldots, X_s]$ has class group $G = G_1 \oplus \ldots \oplus G_n$ and $G_1 \cup \ldots \cup G_n \subset G$ is the set of classes containing prime divisors, where $G_{\nu}$ is isomorphic to the class group of $R_{\nu}$ for each $\nu \in [1, n]$.
\end{example}

\smallskip
Next we handle finite direct products.

\medskip
\begin{lemma} \label{4.6}
For $\nu \in [1, 2]$, let $R_{\nu}$ be a ring with total quotient ring $T_{\nu}$ and $I_{\nu} \subset T_{\nu}$ a non-empty subset.
\begin{enumerate}
\item $({R_1}\times {R_2}:_{T_1\times T_2} {I_1}\times I_2)=({R_1}:_{T_1} {I_1})\times ({R_2}:_{T_2} I_2)$.

\smallskip
\item  $({I_1}\times I_2)_{v_{{R_1}\times {R_2}}}= ({I_1})_{v_{R_1}}\times ({I_2})_{v_{R_2}}$.
\end{enumerate}
\end{lemma}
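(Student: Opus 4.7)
The plan is to prove part (1) by unfolding the definition of the colon operation componentwise, and then derive part (2) by applying part (1) twice (first to $I_1\times I_2$, then to the resulting product of inverses). The foundational observation is that $\mathsf T(R_1\times R_2)=T_1\times T_2$, so the colon and $v$-operations on the left-hand sides are indeed computed inside $T_1\times T_2$; this should be noted at the outset.

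For part (1), I would argue set-theoretically. An element $(a_1,a_2)\in T_1\times T_2$ lies in $(R_1\times R_2 :_{T_1\times T_2} I_1\times I_2)$ if and only if $(a_1 x_1,\,a_2 x_2)\in R_1\times R_2$ for every $(x_1,x_2)\in I_1\times I_2$. Because $I_1$ and $I_2$ are both non-empty, the choices of $x_1\in I_1$ and $x_2\in I_2$ are independent, so this condition decouples into $a_1 I_1\subset R_1$ and $a_2 I_2\subset R_2$, which is precisely membership in $(R_1:_{T_1}I_1)\times(R_2:_{T_2}I_2)$. The non-emptiness hypothesis on the $I_\nu$ is exactly what is needed here to prevent degeneracies in the decoupling.

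For part (2), I would apply (1) twice. First, $(I_1\times I_2)^{-1} = (R_1\times R_2 :_{T_1\times T_2} I_1\times I_2) = I_1^{-1}\times I_2^{-1}$ by (1). Next, since $0\in I_\nu^{-1}$ by Lemma \ref{2.1}.1, both $I_1^{-1}$ and $I_2^{-1}$ are non-empty, so (1) applies again to give
\[
(I_1\times I_2)_{v_{R_1\times R_2}}=\bigl((I_1\times I_2)^{-1}\bigr)^{-1}=(I_1^{-1}\times I_2^{-1})^{-1}=(I_1^{-1})^{-1}\times(I_2^{-1})^{-1}=(I_1)_{v_{R_1}}\times(I_2)_{v_{R_2}}.
\]

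There is no genuine obstacle: the argument is a routine unfolding of the definitions, and the only subtlety is keeping track of the non-emptiness assumptions so that (1) can be applied at each step. I would expect the whole proof to fit in a few lines.
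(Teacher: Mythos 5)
Your proposal is correct and follows essentially the same route as the paper: part (1) by componentwise unfolding of the colon (using non-emptiness of the $I_\nu$ to decouple the conditions), and part (2) by applying (1) twice after checking the inverses are non-empty. One tiny remark: the fact $0\in I_\nu^{-1}$ does not really come from Lemma \ref{2.1}.1 (which concerns $X_v\supset X\cup\{0\}$) but is immediate from $0\cdot I_\nu=\{0\}\subset R_\nu$; the paper simply asserts the non-emptiness of $(R_\nu:_{T_\nu}I_\nu)$ for the same trivial reason.
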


\begin{proof} Observe that $T_1\times T_2$ is a total quotient ring of ${R_1}\times {R_2}$.

\smallskip
1. Let  $x = (x_1, x_2) \in T_1\times T_2$, and note that $x_1 I_1 \not=\emptyset$ and $x_2 I_2 \not=\emptyset$. Therefore, $x \in ({R_1}\times {R_2}:_{T_1\times T_2} I_1\times I_2)$ if and only if $(x_1,x_2)(I_1 \times I_2)\subset {R_1}\times {R_2}$ if and only if $x_1 I_1 \times x_2 I_2\subset {R_1}\times {R_2}$ if and only if $x_1 I_1 \subset {R_1}$ and $x_2 I_2 \subset {R_2}$ if and only if $x_1 \in ({R_1}:_{T_1} I_1)$ and $x_2 \in ({R_2}:_{T_2} I_2)$ if and only if $x\in ({R_1}:_{T_1} I_1 )\times ({R_2}:_{T_2} I_2)$.

\smallskip
2.  Note that $({R_1}:_{T_1} I_1)\not=\emptyset$ and $({R_2}:_{T_2} I_2)\not=\emptyset$. By 1. we have $(I_1\times I_2)_{v_{{R_1}\times {R_2}}}=({R_1}\times {R_2}:_{T_1\times T_2} ({R_1}\times {R_2}:_{T_1\times T_2} I_1\times I_2))=({R_1}\times {R_2}:_{T_1\times T_2} ({R_1}:_{T_1} I_1)\times ({R_2}:_{T_2} I_2))=({R_1}:_{T_1} ({R_1}:_{T_1} I_1))\times ({R_2}:_{T_2} ({R_2}:_{T_2} I_2))= ({I_1})_{v_{R_1}}\times ({I_2})_{v_{R_2}}$.
\end{proof}

\medskip
\begin{proposition} \label{4.7}
Let $R_1, R_2$, and $R$ be rings such that $R = R_1 \times R_2$.
\begin{enumerate}
\item $R$ is a Marot ring $($a $v$-Marot ring$)$ if and only if $R_1$ and $R_2$ are Marot rings $(v$-Marot rings$)$.

\smallskip
\item $R$ is a
{\rm C}-ring if and only if the following three conditions are satisfied{\rm \,:}
\begin{enumerate}
\smallskip
\item[(a)] $R_1$ and $R_2$ are both  {\rm C}-rings.

\smallskip

\item[(b)] ${R_1}^{\bullet} = R_1^\times$, \ or \ $\widehat {R_2}{}^\times \negthinspace /R_2^\times$ \ is finite.

\smallskip

\item[(c)] ${R_2}^{\bullet} = R_2^\times$, \ or \ $\widehat {R_1}{}^\times \negthinspace /R_1^\times$ \ is finite.
\end{enumerate}
\end{enumerate}
\end{proposition}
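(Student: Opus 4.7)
My plan is to reduce both parts to product decompositions via $R^{\bullet} = R_1^{\bullet} \times R_2^{\bullet}$, together with the fact that every ideal of $R = R_1 \times R_2$ has the form $I = I_1 \times I_2$ with $I_i$ an ideal of $R_i$, and that $I$ is regular precisely when both $I_1$ and $I_2$ are regular (so $I^{\bullet} = I_1^{\bullet} \times I_2^{\bullet}$).

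For Part 1, the ideal generated in $R$ by $I^{\bullet}$ equals $\langle I_1^{\bullet}\rangle_{R_1} \times \langle I_2^{\bullet}\rangle_{R_2}$, as one sees by multiplying representatives by the idempotents $(1,0)$ and $(0,1)$. Combined with Lemma~\ref{4.6}, which gives $(I_1 \times I_2)_{v_R} = (I_1)_{v_{R_1}} \times (I_2)_{v_{R_2}}$, the equations $I = \langle I^{\bullet}\rangle_R$ and $I = (I^{\bullet})_{v_R}$ each reduce to their componentwise analogues. Hence $R$ is Marot (resp.\ $v$-Marot) if and only if both $R_1$ and $R_2$ are; for the implication from the factors to $R$ one uses the decomposition directly, and for the converse one lifts a regular ideal $I_i$ of $R_i$ to the regular ideal $I_i \times R_{3-i}$ of $R$, noting that $\langle R_{3-i}^{\bullet}\rangle_{R_{3-i}} = (R_{3-i}^{\bullet})_{v_{R_{3-i}}} = R_{3-i}$ because $1 \in R_{3-i}^{\bullet}$.

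For Part 2, since $R$ is a C-ring iff $H := R^{\bullet} = H_1 \times H_2$ is a C-monoid (where $H_i := R_i^{\bullet}$), the statement becomes a theorem on direct products of C-monoids. The dictionary between ring- and monoid-theoretic data is: $H_i$ is a group precisely when $R_i^{\bullet} = R_i^{\times}$; and Theorem~\ref{3.5}.2 together with the equality $(R_i^{\bullet})^{\times} = R_i^{\times}$ yields $\widehat{H_i}^{\times}/H_i^{\times} = \widehat{R_i}^{\times}/R_i^{\times}$. Hence conditions (b) and (c) translate exactly to the monoid conditions ``$H_1$ is a group or $\widehat{H_2}^{\times}/H_2^{\times}$ is finite'' and symmetrically.

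The heart of the matter is the monoid product result, and this is where I expect the main obstacle. Given factorial embeddings $H_i \hookrightarrow F_i$ realising each $H_i$ as a C-monoid, one has $H_1 \times H_2 \hookrightarrow F_1 \times F_2$ with $F_1 \times F_2$ factorial, and because $(y_1,y_2)^{-1}(H_1 \times H_2) \cap (F_1 \times F_2) = (y_1^{-1}H_1 \cap F_1) \times (y_2^{-1}H_2 \cap F_2)$, a canonical semigroup isomorphism $\mathcal{C}(H_1 \times H_2, F_1 \times F_2) \cong \mathcal{C}(H_1, F_1) \times \mathcal{C}(H_2, F_2)$. The reduced class semigroup corresponds to those pairs $([y_1],[y_2])$ with $(y_1,y_2) \notin F_1^{\times} \times F_2^{\times}$, plus $[(1,1)]$. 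Since unit classes in $\mathcal{C}(H_i,F_i)$ are parametrised injectively by $F_i^{\times}/H_i^{\times}$ (using $H_i \cap F_i^{\times} = H_i^{\times}$), whenever $H_1$ possesses a non-unit $h_1$, the assignment $u_2 \mapsto [(h_1,u_2)]$ injects $F_2^{\times}/H_2^{\times}$ into $\mathcal{C}^*(H_1 \times H_2, F_1 \times F_2)$, and symmetrically. Finiteness of the reduced class semigroup therefore forces the two disjunctive alternatives in (b) and (c), while the converse direction simply combines the finiteness inputs. To express the conditions in terms of $\widehat{R_i}$ rather than an arbitrary factorial overmonoid, one must select $F_i$ canonically as the free abelian monoid on the prime divisors of the Krull monoid $\widehat{H_i}$ supplied by Proposition~\ref{4.3}, so that $F_i^{\times} = \widehat{H_i}^{\times}$; with this choice, the monoid criterion translates verbatim into the ring-theoretic conditions of the proposition.
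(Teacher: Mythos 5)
Part 1 of your argument is essentially the paper's: the componentwise description of (regular) ideals of $R_1\times R_2$ together with Lemma~\ref{4.6} reduces both the Marot and the $v$-Marot property to the factors (the paper outsources the Marot case to a reference and does the $v$-Marot case exactly as you do). For Part 2, however, the paper gives a one-line proof by citing \cite[Theorem 2.9.16]{Ge-HK06a}; you are in effect reproving that theorem. Your structural observations are sound: $(y_1,y_2)^{-1}(H_1\times H_2)\cap(F_1\times F_2)=(y_1^{-1}H_1\cap F_1)\times(y_2^{-1}H_2\cap F_2)$ does give $\mathcal C(H_1\times H_2,F_1\times F_2)\cong\mathcal C(H_1,F_1)\times\mathcal C(H_2,F_2)$, and the unit classes of $\mathcal C(H_i,F_i)$ are indeed parametrized by $F_i^{\times}/H_i^{\times}$ because $H_i\cap F_i^{\times}=H_i^{\times}$.

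The gap is in how you pass between an arbitrary defining system and the canonical one. For necessity, if $R^{\bullet}=H_1\times H_2$ is a C-monoid, the factorial monoid $F$ witnessing this need not be of the form $F_1\times F_2$, so your decomposition is not available until you replace $F$ by the canonical defining system $F_i=\widehat{H_i}^{\times}\times\mathcal F(P_i)$; the fact that the reduced class semigroup remains finite for this particular choice is a nontrivial theorem (\cite[Theorem 2.9.13]{Ge-HK06a}) which you use silently. (Note also that ``the free abelian monoid on the prime divisors'' is reduced, so it cannot have unit group $\widehat{H_i}^{\times}$; you need the product $\widehat{H_i}^{\times}\times\mathcal F(P_i)$, as in the proof of Theorem~\ref{4.8}.) For sufficiency, ``simply combines the finiteness inputs'' skips the case distinction that makes (b) and (c) disjunctions: if $H_1$ is a group while $\widehat{H_2}^{\times}/H_2^{\times}$ is infinite, the classes $[(y_1,u_2)]$ with $y_1\in F_1\setminus F_1^{\times}$ and $u_2\in F_2^{\times}$ would be infinite in number, and the argument only closes because the canonical choice forces $P_1=\emptyset$, i.e.\ $F_1=H_1$ has no non-units. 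With these two points made explicit your route works, but as written it assumes the hardest ingredient of the cited theorem rather than proving it.
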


\begin{proof} Clearly, we have $(R_1 \times R_2)^{\bullet}= {R_1}^{\bullet}\times {R_2}^{\bullet}$, and a subset $I \subset R_1 \times R_2$  is an ideal of $R_1 \times R_2$ if and only if $I = I_1 \times I_2$ for some ideal $I_1$ of $R_1$ and some ideal $I_2$ of $R_2$. Furthermore, for all subsets $J_1 \subset R_1$ and $J_2 \subset R_2$, we have $({J_1}\times J_2)^{\bullet}={J_1}^{\bullet}\times {J_2}^{\bullet}$.

\smallskip
1. The fact that $R_1 \times R_2$ is a Marot ring if and only if both $R_1$ and $R_2$ are Marot rings is easy and well-known (see \cite[Proposition 4]{Po-Sp83a}). We verify the statement for $v$-Marot rings. Let ${R_1}\times {R_2}$ be a $v$-Marot ring and $I_1 \in\mathcal{I}_v({R_1})$. It follows from Lemma \ref{4.6} that $I_1 \times {R_2}\in\mathcal{I}_v({R_1}\times {R_2})$. Therefore, Lemma \ref{4.6} implies that $I_1 \times {R_2}=((I_1 \times {R_2})^{\bullet})_{v_{{R_1}\times {R_2}}}=({I_1}^{\bullet}\times {R_2}^{\bullet})_{v_{{R_1}\times {R_2}}}=({I_1}^{\bullet})_{v_{R_1}}\times ({R_2}^{\bullet})_{v_{R_2}}=({I_1}^{\bullet})_{v_{R_1}}\times {R_2}$, and thus $I_1 =({I_1}^{\bullet})_{v_{R_1}}$. This implies that ${R_1}$ is a $v$-Marot ring. Analogously, it follows that ${R_2}$ is a $v$-Marot ring.

Conversely, suppose that ${R_1}$ and ${R_2}$ are $v$-Marot rings and $I \in\mathcal{I}_v({R_1}\times {R_2})$. Thus $I = I_1 \times I_2$ for some ideal $I_1$ of ${R_1}$ and some ideal $I_2$ of ${R_2}$. By Lemma \ref{4.6}.2, we infer that $I_1 \times I_2 = I = I_{v_{{R_1}\times {R_2}}}= ({I_1})_{v_{R_1}}\times ({I_2})_{v_{R_2}}$, and since $I_1 \not=\emptyset$ and $I_2 \not=\emptyset$ it follows that $I_1 = ({I_1})_{v_{R_1}}$ and $I_2 = ({I_2})_{v_{R_2}}$. Thus we obtain that ${I_1}^{\bullet}\times {I_2}^{\bullet}=I^{\bullet}\not=\emptyset$, and thus ${I_1}^{\bullet}\not=\emptyset$ and ${I_2}^{\bullet}\not=\emptyset$. Therefore, $I_1 \in\mathcal{I}_v({R_1})$ and $I_2 \in\mathcal{I}_v({R_2})$. It follows from Lemma \ref{4.6} that $I =I_1 \times  I_2 = ({I_1}^{\bullet})_{v_{R_1}}\times ({I_2}^{\bullet})_{v_{R_2}}=({I_1}^{\bullet}\times {I_2}^{\bullet})_{v_{{R_1}\times {R_2}}}=(I^{\bullet})_{v_{{R_1}\times {R_2}}}$.

\smallskip
2. The characterization of when $R_1 \times R_2$ is a C-ring follows from \cite[Theorem 2.9.16]{Ge-HK06a}.
\end{proof}

\medskip
Corollary \ref{4.4}.2 provides a list of necessary conditions for a $v$-Marot ring to be a C-ring. The next theorem demonstrates that these necessary conditions together with one additional condition (namely the finiteness of the residue class ring) actually guarantees that a $v$-Marot ring is a C-ring. In the domain case there are ring theoretical characterizations of C-domains in various settings. Among others, if $R$ is a non-local semilocal noetherian domain, then $R$ is a C-domain if and only if the class group $\mathcal C (\widehat R)$ and the residue class ring $R / (R : \widehat R)$ are both finite (\cite[Corollary 4.5]{Re13a}), and these are precisely the conditions which we have in Theorem \ref{4.8}.2.

\medskip
\begin{theorem} \label{4.8}
Let $R$ be a $v$-Marot Mori ring such that  $\mathfrak{f}=(R \DP {\widehat R})$ is regular.
\begin{enumerate}
\item ${\widehat R}$ is a $v$-Marot Krull ring.

\smallskip
\item If $\mathcal{C}({\widehat R})$ and ${\widehat R}/\mathfrak{f}$ are both finite,
      then $R$ is a {\rm C}-ring.
\end{enumerate}
\end{theorem}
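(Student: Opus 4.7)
This follows directly from Section 3. Since $R$ is Mori with regular conductor, Proposition \ref{3.8}.4 gives that $\widehat R$ is a Krull ring. Regularity of $\mathfrak{f}$ also implies $\widehat R \in \mathcal F_v(R)$ by Proposition \ref{3.7}.2, and then Proposition \ref{3.8}.3, applied to the overring $\widehat R$ of the $v$-Marot ring $R$, yields that $\widehat R$ is $v$-Marot.

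\textbf{Proof of (2), strategy.} The plan is to pass to the monoid of regular elements $H := R^{\bullet}$ and invoke the monoid version of the classical C-domain criterion \cite[Theorem 2.11.9]{Ge-HK06a}. The hypotheses transfer cleanly: by Theorem \ref{3.5}.2 we have $\widehat{H} = (\widehat R)^{\bullet}$; by Theorem \ref{3.5}.3 and \ref{3.5}.4 together with (1), $H$ is a Mori monoid and $\widehat H$ is a Krull monoid; by Corollary \ref{3.6}.3 we get $\mathcal C(\widehat H) \cong \mathcal C(\widehat R)$, hence finite; and since $\mathfrak{f}$ is regular, any $c \in \mathfrak{f}^{\bullet}$ satisfies $c \widehat H \subset H$, so the monoid conductor $(H \DP \widehat H)$ is non-empty.

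\textbf{Proof of (2), main step.} It remains to translate the ring-theoretic finiteness of $\widehat R/\mathfrak{f}$ into the finiteness of the reduced class semigroup of $H$ inside a suitable factorial monoid $F$. Using that $\widehat H$ is Krull with finite class group, I would construct $F$ factorial with $\widehat H \hookrightarrow F$ and $F/\widehat H$ a finite abelian group, so that also $H \hookrightarrow F$. The task is then to show that two elements of $F$ are $H$-equivalent (in the sense defining $\mathcal C(H, F)$) whenever they have the same image in $\mathcal C(\widehat H)$ (finitely many possibilities) and the same residue class in $\widehat R/\mathfrak{f}$ (again finitely many possibilities). This would make $\mathcal C^*(H, F)$ finite and thus $H$ a C-monoid, i.e., $R$ a C-ring.

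\textbf{Main obstacle.} The crux is constructing $F$ and verifying that $(H, F)$-equivalence on $F$ is controlled by the finite data above; this is where the $v$-Marot hypothesis does its work, since Theorem \ref{3.5} provides the faithful semigroup isomorphism between regular divisorial ideals of $R$ and those of $H$, ensuring that ring-theoretic residues modulo $\mathfrak{f}$ faithfully encode the multiplicative obstructions at the monoid level. The domain-case argument in \cite[Theorem 2.11.9]{Ge-HK06a} provides the template, and our task reduces to checking that the same combinatorial argument carries over to the monoid of regular elements of a $v$-Marot Mori ring.
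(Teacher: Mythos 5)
Your proof of (1) is correct and coincides with the paper's argument (Proposition \ref{3.7}.2 plus Proposition \ref{3.8}). For (2), however, what you have written is a plan rather than a proof: the entire content of the theorem sits in your ``main step'' and ``main obstacle'' paragraphs, and there you only assert that the domain-case argument of \cite[Theorem 2.11.9]{Ge-HK06a} ``carries over'' without carrying it over. Moreover, the specific criterion you propose --- that $y,y'\in F$ are $H$-equivalent whenever they have the same image in $\mathcal C(\widehat H)$ and the same residue class in $\widehat R/\mathfrak f$ --- is not even well-defined: a general element of $F={\widehat R}^{\times}\times\mathcal F(P)$ is a formal product of prime divisors, not an element of $\widehat R$, so it has no residue class modulo $\mathfrak f$. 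As stated, this step cannot be executed.

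The argument that actually works (and is the one in the paper) is more delicate. After fixing a class $g\in G=\mathsf q(F)/\mathsf q({\widehat R}^{\bullet})$ (there are finitely many since $\mathcal C(\widehat R)$ is finite), one picks $b\in g\cap F$, sets $B=bF\cap{\widehat R}^{\bullet}\in\mathcal I_v^*({\widehat R}^{\bullet})$, and checks that $x\mapsto bx$ is a bijection $B^{-1}\to g\cap F$ satisfying $(bx)^{-1}R^{\bullet}\cap F=b^{-1}(x^{-1}R^{\bullet}\cap B)$; this reduces the problem to the finiteness of $\{x^{-1}R^{\bullet}\cap B\mid x\in B^{-1}\}$. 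One then transfers to the ring side via the $v$-Marot correspondence (Lemma \ref{3.4}, Theorem \ref{3.5}), setting $C=(B^{-1})_{v_{\widehat R}}$, and observes that each $x^{-1}R\cap(\widehat R\DP C)$ is an $R$-module squeezed between $(R\DP C)$ and $(\widehat R\DP C)$. Finiteness of $(\widehat R\DP C)/(R\DP C)$ is then obtained from the Mori property, which furnishes a finite regular $E\subset C^{\bullet}$ with $C=E_{v_R}=E_{v_{\widehat R}}$, hence $(R\DP C)=\bigcap_{e\in E}e^{-1}R$ and $(\widehat R\DP C)=\bigcap_{e\in E}e^{-1}\widehat R$, together with the finiteness of $\widehat R/\mathfrak f$, via the embedding of $(\widehat R\DP C)/(R\DP C)$ into $\prod_{e\in E}e^{-1}\widehat R/e^{-1}R$ and the isomorphism $e^{-1}\widehat R/e^{-1}R\cong\widehat R/R$. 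This chain of reductions is exactly where the $v$-Marot and Mori hypotheses do their work; none of it appears in your proposal, so the proof of (2) is essentially missing.
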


\begin{proof}
1. This follows from Proposition \ref{3.7}.2 and from Proposition \ref{3.8}.

\smallskip
2. We use the following facts from the theory of Krull monoids (see \cite[Section 2.4]{Ge-HK06a}). Every Krull monoid $H$ has a divisor theory, and $H = H^{\times} \times H_0$ for a reduced Krull monoid $H_0$ with $H_0 \cong H_{\red}$. If $\varphi \colon H \to F = \mathcal F (P)$ is a divisor theory, then there is an isomorphism $\varphi^* \colon F \to \mathcal I_v^* (H)$ and an isomorphism $\overline{\varphi} \colon G = \mathsf q (F)/ \mathsf q ( \varphi (H)) \to \mathcal C_v (H)$. We consider the classes $g \in G$ as subsets of $\mathsf q (F)$. Thus for any subset $F' \subset F$, the set $g \cap F'$ consists of all  elements of $F'$ lying in the class $g$.

By Theorem \ref {3.5} and by 1., ${\widehat R}^{\bullet}$ is a Krull monoid. Thus there exists a reduced Krull monoid $D$ such that ${\widehat R}^{\bullet} = {\widehat R}^\times \times D$ and $D \cong ({\widehat R}^{\bullet})_{\red}$. Let the embedding $D \hookrightarrow F_0 = \mathcal{F}(P)$ be a divisor theory,  and set $F = {\widehat R}^{\times} \times F_0$. Then
$\RingReg{R} \subset {\widehat R}^{\bullet} = {\widehat R}^\times \times D \subset {\widehat R}^\times \times F_0 = F$ and $F$ is factorial. Also $F^\times = {\widehat R}^\times$ and
$F^\times \cap \RingReg{R} = {\widehat R}^\times \cap \RingReg{R} = R^\times$.

Thus we have that $\RingReg{R}$ is a submonoid of a factorial monoid $F$
and it remains to prove that the reduced class semigroup $\mathcal{C}^*(\RingReg{R},F)$ is finite. We will even show that the class semigroup $\mathcal C (R^{\bullet}, F)$ is finite, and this will be done in several steps.

Let $\eta \colon F \to F_0$ denote the canonical projection. Then $\eta |
      {\widehat R}^{\bullet} \colon {\widehat R}^{\bullet} \to F_0$ is a divisor theory, and there is an isomorphism  $\eta^* \colon F_0 \to \mathcal I_v^* ({\widehat R}^{\bullet})$. In particular, if $a,b \in F$ and $a F \cap {\widehat R}^{\bullet}
      = b F \cap {\widehat R}^{\bullet}$, then $aF = bF$, and if $a \in {\widehat R}^{\bullet}$, then
      $a {\widehat R}^{\bullet} = a F \cap {\widehat R}^{\bullet}$.

The group $G = \mathsf q (F) /\mathsf q ({\widehat R}^{\bullet})$  is isomorphic to $\mathcal{C}_v ({\widehat R}^{\bullet})$, and thus $G$ is finite by Proposition \ref{3.7}.4.
Since ${\widehat R}^{\bullet} \subset F$ is cofinal, it follows that $F/{\widehat R}^{\bullet} = G$  (see the discussion of class groups in Section \ref{2}), and thus the canonical map  $\iota \colon F \to G$  is surjective.

Since $G$ is finite and $F = \bigcup_{g \in G} \iota^{-1}(g) = \bigcup_{g
\in G} g \cap F$, it follows from (\ref{bijection}) that it suffices to prove that, for each $g \in G$, the set
\begin{align*}
\{ y^{-1} \RingReg{R} \cap F \mid y \in g \cap F\}
\end{align*}
is finite.

Now let $g \in G$ be fixed and let $b \in g \cap F$. Set $B = bF \cap
{\widehat R}^{\bullet} \in \mathcal I_v^* ({\widehat R}^{\bullet})$. Then there is a bijective map
      \begin{align*}
        \gamma \colon \begin{cases}
          B^{-1} &\to g \cap F \\
          x &\mapsto bx
        \end{cases}
        \text{.}
      \end{align*}
Indeed, if $x \in B^{-1}$, then $x B \in \mathcal I_v^* ({\widehat R}^{\bullet})$ and thus $xB =
      b' F \cap {\widehat R}^{\bullet}$ for some $b' \in F_0$. If $u \in B$, then $ux \in
      {\widehat R}^{\bullet}$ and therefore
      \[
        uxb F \cap {\widehat R}^{\bullet} =
        ux B
        =
        (uF \cap {\widehat R}^{\bullet}) \cdot_{v_{\widehat{R}^{\bullet}}} (b' F \cap {\widehat R}^{\bullet})
        =
        ub' F \cap {\widehat R}^{\bullet}
        \text{.}
      \]
      Hence $uxb F = ub' F$, which implies $bx \in F$. Since $\iota(bx) =
      \iota(b) \in g$, we obtain $bx \in g \cap F$ and thus $\gamma$ is
      well-defined. Clearly, $\gamma$ is injective.

      For surjectivity, let $b' \in g \cap F$. Then $\iota(b) = \iota(b')$ implies $x = b^{-1} b'
      \in \mathsf q (\RingReg{R})$, and $Bx \subset \mathsf q (\RingReg{R}) \cap bx F \subset
      \mathsf q (\RingReg{R}) \cap F = {\widehat R}^{\bullet}$. Hence $x \in B^{-1}$ and $b' =
      \gamma(x)$. Thus $\gamma$ is surjective.

      Now let $x \in B^{-1}$, then
      \begin{align*}
        \gamma(x)^{-1} \RingReg{R} \cap F
        &=
        b^{-1} (x^{-1} \RingReg{R} \cap bF)
        =
        b^{-1} (x^{-1} \RingReg{R} \cap \mathsf q (\RingReg{R}) \cap F \cap bF) \\
        &=
        b^{-1} (x^{-1} \RingReg{R} \cap {\widehat R}^{\bullet} \cap bF)
        =
        b^{-1} (x^{-1} \RingReg{R} \cap B) \,.
      \end{align*}
      Thus it already suffices to prove that the set $Z = \{ x^{-1} \RingReg{R}
      \cap B \mid x \in B^{-1}\}$ is finite.

      Now set $C = (B^{-1})_{v_{\widehat{R}}}$ and $Z_0
      = \{ x^{-1} R \cap (\widehat{R}\DP C) \mid x \in C^\bullet \}$. Then $C\in\Fv{{\widehat R}}$.
      Since $B^{-1}\in\mathcal{F}_v(\widehat{R}^{\bullet})$, and $\widehat{R}$ is a $v$-Marot ring, it follows by Lemma \ref{3.4}
      that $C^{\bullet}=((B^{-1})_{v_{\widehat{R}}})^{\bullet}=B^{-1}$ and $(\widehat{R}:C)^{\bullet}=(\widehat{R}^{\bullet}:C^{\bullet})$.
      Therefore,
      \begin{align*}
        Z
        &=\{x^{-1}R^{\bullet}\cap (\widehat{R}^{\bullet}:B^{-1})\mid x\in B^{-1}\}=\{x^{-1}R^{\bullet}\cap (\widehat{R}^{\bullet}:C^{\bullet})\mid x\in C^{\bullet}\}\\
        &=\{(x^{-1}R)^{\bullet}\cap (\widehat{R}:C)^{\bullet}\mid x\in C^{\bullet}\}=\{(x^{-1}R\cap (\widehat{R}:C))^{\bullet}\mid x\in C^{\bullet}\}=\{I^{\bullet}\mid I\in Z_0\}.
      \end{align*}
      If $x \in C^\bullet$, then we have the $R$-modules
      $(R \DP C) \subset x^{-1} R \cap (\widehat{R}\DP C) \subset ({\widehat R} \DP C)$. Hence it suffices
      to prove that $({\widehat R} \DP C)/(R \DP C)$ is finite.

      Since $\Fv{{\widehat R}}\subset\Fv{R}$ by Proposition \ref{3.7}.2 and Proposition \ref{3.8} and
      since $R$ is a $v$-Marot Mori ring, there
      exists a finite regular set $E\subset C^{\bullet}$ such that $C=E_{v_R}$. Hence $E_{v_{\widehat R}}
      \subset C$ and since $E_{v_{\widehat R}}\in\Fv{{\widehat R}}\subset\Fv{R}$, it follows that
      $E_{v_{\widehat R}}=(E_{v_{\widehat R}})_{v_R}\supset E_{v_R}=C$ and so $C=E_{v_{\widehat R}}=E_{v_R}$. We now have
      \begin{align*}
        (R \DP C) &= (R \DP E) = \bigcap_{e \in E} e^{-1} R, \text{ and} \\
        ({\widehat R} \DP C) &= ({\widehat R} \DP E) = \bigcap_{e \in E} e^{-1} {\widehat R}
      \end{align*}
      and thus there is a monomorphism $({\widehat R}:C)/(R:C) \to \prod_{e \in E} e^{-1} {\widehat R}
      / e^{-1} R$. Since for every $e \in E$ we have $e^{-1} {\widehat R} / e^{-1} R \cong
      {\widehat R}/R \cong ({\widehat R}/\mathfrak{f}) / (R/\mathfrak{f})$ and this group is finite by
      assumption, so $({\widehat R} \DP C)/(R \DP C)$ is also finite.
\end{proof}

\medskip
\begin{corollary} \label{4.9}
Let $R$ be a $v$-Marot Mori ring such that $(R:\widehat{R})$ is regular and $\widehat{R}/(R:\widehat{R})$ and $\mathcal{C}(\widehat{R})$ are both finite. Let $R\subset A\subset\widehat{R}$ be a  ring such that $A_{v_R}=A$ and let $S\subset R^{\bullet}$ be a multiplicatively closed subset.
\begin{enumerate}
\item $A$ is a $v$-Marot Mori ring, $(A:\widehat{A})$ is regular, and $\widehat{A}/(A:\widehat{A})$ and $\mathcal{C}(\widehat{A})$ are both finite.

\smallskip
\item $S^{-1}R$ is a $v$-Marot Mori ring, $(S^{-1}R:\widehat{S^{-1}R})$ is regular, and $\widehat{S^{-1}R}/(S^{-1}R:\widehat{S^{-1}R})$ and $\mathcal{C}(\widehat{S^{-1}R})$ are both finite.

\smallskip
\item $A$ and $S^{-1}R$ are {\rm C}-rings.
\end{enumerate}
\end{corollary}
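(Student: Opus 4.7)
The plan is, in each of (1) and (2), to verify that the ring in question again satisfies the full list of hypotheses of Theorem \ref{4.8}.2, namely that it is a $v$-Marot Mori ring whose conductor to the complete integral closure is regular and for which both the residue class ring and the divisor class group of that complete integral closure are finite. Once this is done, (3) follows at once by applying Theorem \ref{4.8}.2 to $A$ and to $S^{-1}R$.

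For (1), we would first observe that $A\in\mathcal F_v(R)$: by assumption $A_{v_R}=A$, the set $A$ is regular since $R\subset A$, and $A$ is $R$-fractional because $A\subset\widehat R$ and $\widehat R$ is $R$-fractional (any regular element of $\mathfrak f=(R\DP\widehat R)$ serves as a common denominator). Proposition \ref{3.8} then yields that $A$ is a $v$-Marot Mori ring. Next we argue $\widehat A=\widehat R$: the inclusion $\widehat A\subset\widehat{\widehat R}=\widehat R$ comes from $A\subset\widehat R$ together with Proposition \ref{3.8}.4 (giving that $\widehat R$ is completely integrally closed), while $\widehat R\subset\widehat A$ follows using $\mathsf T(A)=\mathsf T(R)$ and $R^{\bullet}\subset A^{\bullet}$. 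Hence $(A\DP\widehat A)=(A\DP\widehat R)\supset(R\DP\widehat R)$ is regular, $\widehat A/(A\DP\widehat A)$ is a quotient of the finite ring $\widehat R/(R\DP\widehat R)$, and $\mathcal C(\widehat A)=\mathcal C(\widehat R)$ is finite.

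For (2), Proposition \ref{3.11}.5 gives the $v$-Marot property, while Propositions \ref{3.11}.3 and \ref{3.11}.4 together yield $\widehat{S^{-1}R}=S^{-1}\widehat R$ and $(S^{-1}R\DP\widehat{S^{-1}R})=S^{-1}\mathfrak f$, the latter being regular since any regular element of $\mathfrak f$ stays regular in $S^{-1}R$ (using $\mathsf T(S^{-1}R)=\mathsf T(R)$). The residue ring $\widehat{S^{-1}R}/(S^{-1}R\DP\widehat{S^{-1}R})$ is then a localization of the finite ring $\widehat R/\mathfrak f$ and hence finite. For the class group, we would apply Proposition \ref{3.11}.6 to the Krull ring $\widehat R$ to obtain finiteness of $\mathcal F_v(S^{-1}\widehat R)/\{aS^{-1}\widehat R\mid a\in\mathsf T(R)^{\bullet}\}$, which in the Krull setting coincides with $\mathcal C(\widehat{S^{-1}R})$. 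The remaining nontrivial point, and what we expect to be the main obstacle since Proposition \ref{3.11} does not state it outright, is that $S^{-1}R$ is Mori. We would settle this by a contraction argument mirroring the proof of Proposition \ref{3.11}.2: every $I\in\mathcal I_v(S^{-1}R)$ satisfies $I\cap R\in\mathcal I_v(R)$ and $I=S^{-1}(I\cap R)$, so any ascending chain in $\mathcal I_v(S^{-1}R)$ contracts to one in $\mathcal I_v(R)$, which stabilizes by the Mori hypothesis on $R$, forcing the original chain to stabilize as well.
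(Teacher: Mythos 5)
Your proposal is correct and follows essentially the same route as the paper: part (1) via $\widehat{A}=\widehat{R}$, the conductor inclusion $(R:\widehat{R})\subset(A:\widehat{A})$, and Proposition \ref{3.8}; part (2) via Proposition \ref{3.11}; and part (3) by applying Theorem \ref{4.8}.2. The only deviation is the Mori property of $S^{-1}R$, which the paper obtains by citing Lucas \cite[Theorem 2.13]{Lu04a}, whereas your contraction argument (an ascending chain in $\mathcal{I}_v(S^{-1}R)$ pulls back via $I\mapsto I\cap R$ to one in $\mathcal{I}_v(R)$, with $I=S^{-1}(I\cap R)$ as in the proof of Proposition \ref{3.11}.2) gives a self-contained proof of the same fact.
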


\begin{proof}
We set  $T=\mathsf{T}(R)$.

\smallskip
1. Clearly, $\widehat{R}\subset\widehat{A}\subset\widehat{\widehat{R}}=\widehat{R}$, and thus $\widehat{A}=\widehat{R}$. Therefore, $\mathcal{C}(\widehat{A})=\mathcal{C}(\widehat{R})$ is finite. Moreover, $(R:\widehat{R})=(R:\widehat{A})\subset (A:\widehat{A})$, and thus $(A:\widehat{A})$ is regular. We have $\widehat{A}/(A:\widehat{A})=\widehat{R}/(A:\widehat{A})$ is an epimorphic image of $\widehat{R}/(R:\widehat{R})$. This implies that $\widehat{A}/(A:\widehat{A})$ is finite. Observe that $A\in\mathcal{F}_v(R)$. It follows from Proposition \ref{3.8} that $A$ is a $v$-Marot Mori ring.

\smallskip
2. By \cite[Theorem 2.13]{Lu04a} we have $S^{-1}R$ is a Mori ring. It follows from Proposition \ref{3.11} that $S^{-1}R$ is a $v$-Marot ring, $\widehat{S^{-1}R}=S^{-1}\widehat{R}$, and $(R:\widehat{R})\subset S^{-1}(R:\widehat{R})=(S^{-1}R:\widehat{S^{-1}R})$. Therefore, $(S^{-1}R:\widehat{S^{-1}R})$ is regular. Since for every ideal $I$ of a ring $D$ and every multiplicatively closed subset $\mathcal S \subset D$ we have
\[
\mathcal S^{-1}D /  \mathcal S^{-1} I  \cong (\mathcal S + I/I)^{-1} (D/I) \,,
\]
it follows that $\widehat{S^{-1}R}/(S^{-1}R:\widehat{S^{-1}R})=S^{-1}\widehat{R}/S^{-1}(R:\widehat{R})$ is finite. By Proposition \ref{3.8}.4 we have $\widehat{R}$ is a Krull ring. Consequently, Proposition \ref{3.7}.3 implies that $\mathcal{F}_v(\widehat{R})/\{a\widehat{R}\mid a\in T^{\bullet}\}=\mathcal{C}(\widehat{R})$ is finite. It follows from Proposition \ref{3.11}.6 that $\mathcal{C}(\widehat{S^{-1}R})=\mathcal{C}(S^{-1}\widehat{R})\subset\mathcal{F}_v(S^{-1}\widehat{R})/\{aS^{-1}\widehat{R}\mid a\in T^{\bullet}\}$ is finite.

\smallskip
3. This is an immediate consequence of 1., 2. and Theorem \ref{4.8}.
\end{proof}

\bigskip
\noindent
{\bf Acknowledgement.} We thank the referees for their careful reading of the whole manuscript. They made  us aware of some crucial references and their hints allowed us to simplify several of our arguments.

\providecommand{\bysame}{\leavevmode\hbox to3em{\hrulefill}\thinspace}
\providecommand{\MR}{\relax\ifhmode\unskip\space\fi MR }
% \MRhref is called by the amsart/book/proc definition of \MR.
\providecommand{\MRhref}[2]{%
  \href{http://www.ams.org/mathscinet-getitem?mr=#1}{#2}
}
\providecommand{\href}[2]{#2}

\end{document}